\numberwithin{equation}{section}
\newcommand{\tabcaption}{\def\@captype{table}\caption}
\newtheorem{lem}{Lemma}[section]
\newtheorem{thm}{Theorem}[section]
\newtheorem{rem}{Remark}[section]
\begin{document}

%%%%%%%%%%%%%%%%%%%%%%%%%%%%%%%%%%%%%%%%%%%%%%%%%%%%%%
\title{\large\bf An Optimal Embedded Discontinuous Galerkin Method for   Second-Order Elliptic Problems
\thanks
  {
    This work was supported in part by National Natural Science Foundation
    of China (11771312, 11401407) and Major Research Plan of National
    Natural Science Foundation of China (91430105).
  }
  }
\author{
	Xiao Zhang\thanks{Email: zhangxiaofem@163.com}, \ Xiaoping Xie \thanks{Email: xpxie@scu.edu.cn},\
	\ Shiquan Zhang \thanks{Corresponding author. Email: shiquanzhang@scu.edu.cn}\\
	{School of Mathematics, Sichuan University, Chengdu
		610064, China}
	}

\date{}
\maketitle
\begin{abstract}
	%The embedded discontinuous Galerkin methods (EDG) are obtained from hybridizable discontinuous Galerkin methods by a simple reduction of the space of hybrid unknown. In this paper, we continue the study of embdded discontinuous Galerkin methods for second-order elliptic model problem. As we will prove, the reduction may result in loss of convergence order for flux or potential variable. If polynomials of degree $k+1$, $k+1$, $k$ are used to approximate the scalar potential, where $k\geq 0$, numerical traces and flux for original hybridizable discontinuous Galerkin method, EDG approximations to the scalar variable and its flux converge with optimal order $k+2$, $k+1$, respectively. In other words, we achieve the same convergence performance with less computational comlexity than HDG methods. The same results hold on Cartesian grids.
The  embedded discontinuous Galerkin (EDG) method by Cockburn et al. [SIAM J. Numer. Anal., 2009, 47(4), 2686-2707]
%for  second order elliptic problems
 is obtained from the hybridizable discontinuous Galerkin method by changing the space of the Lagrangian multiplier  from discontinuous  
functions to  continuous ones, and adopts piecewise polynomials of equal degrees   on simplex meshes for all   variables. % which results in suboptimal   approximation  of flux.
In this paper, we  analyze a new  EDG method for   second order elliptic problems  on  polygonal/polyhedral meshes.  By using piecewise polynomials of degrees $k+1$, $k+1$, $k$ ($k\geq 0$) to approximate the potential, numerical trace and flux, respectively,  the new method is shown to yield optimal convergence rates for both the potential and flux approximations. Numerical experiments are provided to confirm the theoretical results.
\end{abstract}
	{\bf {Keywords.}}  embedded discontinuous Galerkin method, hybridizable discontinuous Galerkin method, optimal convergence rate

\section{Introduction}
Let  $\Omega \subset \mathbb{R}^d$ ($d=2,3$)   be a polyhedral domain with boundary $\partial \Omega$. We consider  the following second-order elliptic problem: Find the potential $u$ and the flux $\bm\sigma$ such that
\begin{equation} \label{elliptic}
\left\{\begin{array}{l}
  ~~~~\bm c \bm\sigma -\nabla u =0,~~~~~\text{in}~~\Omega\\
  ~~~~~-div \bm\sigma = f, ~~~~~~\text{in}~~ \Omega\\
  ~~~~~~~~~~~~~u= g, ~~~~~~\text{on}~~ \partial \Omega \\
  \end{array}\right.
\end{equation}
where the diffusion-dispersion tensor $\bm c\in[L^2(\Omega)]^{d\times d}$ is a matrix valued function that is symmetric and uniformly positive definite on $\Omega$, $f \in L^2(\Omega )$, and  { $g\in L^2 (\partial \Omega)$}.\\

In \cite{BJRUH}, Cockburn et al. first proposed a unifying framework for hybridization of finite element methods for second-order elliptic problems. The unifying framework includes as particular cases hybridized versions of mixed methods  \cite{FJJRLD,DNA,BJ}, the continuous Galerkin (CG) method \cite{BCJGHW}, and a wide class of hybridizable discontinuous Galerkin (HDG) methods.
% Three new kinds of HDG, or in terms of LDG-H (local DG-hybridizable) methods were proposed in \cite{BJRUH}.
In the HDG framework, the constraint of function continuity on the inter-element boundaries is
relaxed by introducing  numerical traces (Lagrange multipliers)  defined on the inter-element boundaries, thus allowing for piecewise-independent
approximation to the potential or flux solution. By local elimination of the unknowns defined in the interior of elements, the HDG
methods finally lead to symmetric and positive definite (SPD) systems where the unknowns are only the globally coupled degrees of
freedom describing the numerical traces.  We refer to \cite{BJFJPB, BCWQKS, LXAHDG} for some  relevant analyses for the HDG methods.
%The analyses of such kind of methods are based on the use of a projection operator inspired by the form of the numerical traces of the methods, one can see \cite{BJFJPB,LXAHDG} for more details.
% %What analysis idea is follow from \cite{BJFJPB}, which based on the use of a projection operator inspired by the form of the numerical traces of the methods, and \cite{LXAHDG}, which considered and analysed a new kind of HDG method. \\

  The EDG methods  were first proposed in  \cite{SBHKLSP} for   linear shell problems, and then were further studied in \cite{BJSCHK} for second-order elliptic problems. The methods are obtained from the   HDG methods by simply reducing the space of the numerical traces, from piecewise independent to continuous on the whole inter-element boundaries. Since the only degrees of freedom that are globally coupled are precisely those of the numerical traces, such reduction leads to  smaller computational cost of an EDG method   than that of the corresponding HDG method.  Recently, the EDG methods  have been extended to solving several types of fluid flow problems % compressible Euler and Navier-Stokes equations
  \cite{FNR,NPC,PNC}. %, and all these applications had show the efficiency of EDG by such reduction of degrees of freedom.

However, as shown in \cite{BJSCHK}, the EDG methods using  piecewise polynomials of  degree $k(k\geq 1)$    to approximate  all kinds of variables     results in loss of convergence rate for the approximation of flux.
On the other hand, so far  all the EDG methods \cite{BJSCHK,FNR,SBHKLSP,NPC,PNC} are based on simplex meshes, and there is no such work  on general polygonal/polyhedral meshes. We note that the classical analysis of HDG methods on simplex meshes \cite{BJRUH,BJFJPB} is hard to extend to polygonal  meshes; one can see \cite{BCWQKS} for more details.

In this paper, we shall develop a class of new EDG methods for the model problem \eqref{elliptic} on polygonal/polyhedral meshes.    Compared with the original EDG methods in \cite{BJSCHK},   our methods are of the following features.
\begin{itemize}
\item The new methods   use piecewise polynomials of degrees $k+1$, $k+1$, $k$ ($k\geq 0$) to approximate the potential, numerical trace and flux, respectively.

\item  Optimal error estimates are derived for both the potential and flux approximations.

  \item Our analysis   is based on polygonal/polyhedral meshes. The analysis technique here is due to \cite{LXAHDG}, where  a family of HDG methods for  \eqref{elliptic} on simplex meshes were analyzed  under the minimal regularity condition.

\end{itemize}

 The rest of this paper is organized as follows. In Section 2 we introduce   notation. Section 3 describes the   EDG scheme. Section 4 is devoted to the   error estimation  of  the proposed EDG methods. Finally, Section 5 provides some numerical results to verify the theoretical analysis.
%
%\begin{table}	
%	\caption{Compare of several EDG methods }	
%	\centering
%	\begin{tabular}{*{9}{c}}
%		\hline
%		&Method   &  potential $u_h$          & trace $\lambda_h$       &flux $\bm\sigma_h$   &$||u-u_h||$  &$||\bm\sigma -\bm\sigma_h||$  \\
%		\hline
%		&  EDG \cite{BJSCHK}      & $k$        &$k$        & $k$   &$k+1$    &$k$  &$k\geq1$\\
%		&  our EDG       & $k+1$    &$k+1$    & $k$   &$k+2$    &$k+1$ &$k\geq0$\\
%		%&  our EDG type2    &  $k+1$   &$k$        & $k$   &$k+1$    &$k$  &$k\geq1$\\
%		\hline
%	\end{tabular}
%%\begin{tablenotes}
%%	\footnotesize
%%	\item
%%	\centering
%%	we list several EDG methods and the best convergence rates they can reach.
%%\end{tablenotes}
%\end{table}

\section{Notation}
For an arbitrary open set $D\subset\mathbb{R}^d$, we denoted by $H^1(D)$ the Sobolev space of scalar functions on $D$ whose derivatives up to order 1 are square integrable, with the norm  $\Vert\cdot\Vert_{1,D}$. The notation $\vert\cdot\vert_{1,D}$ denotes the semi-norm derived from the partial derivatives of order equal to 1. The space $H_0 ^1(D)$ denotes the closure in $H^1(D)$ of the set of infinitely differentiable functions with compact supports in $D$. We use $( \cdot , \cdot )_D$ and $\langle\cdot ,\cdot\rangle _{\partial D}$ to denote the ${L^2}$-inner products on the square integrable function spaces  ${L^2}(D)$ and ${L^2}(\partial D)$, respectively, with ${\left\|  \cdot  \right\|_D}$ and ${\left\|  \cdot  \right\|_{\partial D}}$ representing the corresponding induced ${L^2}$-norms. Let ${P_k}(D)$ denote the set of polynomials of degree $\leq k$ defined on $D$. 
%For any $\bm \tau, \bm \phi \in [L^2(\Omega)]^d$, we define the coefficients-related $L^2$- inner product  and norm respectively
%as
%$$(\bm \tau, \bm \phi)_{\bm c}:=(\bm c \bm \tau, \bm \phi), \quad ||\bm \tau||_{\bm c}:=(\bm \tau, \bm \tau)_{\bm c}^{\frac12}.$$

 Let  $\mathcal{T}_h=\bigcup \{T\}$ be a conforming and shape regular subdivision of $\Omega$ into   convex polygons ($d$=2) or polyhedron ($d$=3), with $h_T$ being the diameter of $T$  and  $h:=\max\limits_{T\in \mathcal{T}_h}\{h_T\}$. 
Here `shape regular' is  in the sense that the following two assumptions \textbf{M1-M2} hold  \cite{Chen-Xie-arXiv2017}.
%We assume that $\mathcal{T}_h=\bigcup \{T\}$ is  followed fromto introduce a more general definition of shape regular

\begin{itemize}
	\item \textbf{M1} (Star-shaped elements). There exists a positive constant $\theta_*$ such that the following holds: for each element $T\in\mathcal{T}_h$, there exists a point $M_T\in T$ such that $T$ is star-shaped with respect to every point in the circle (or  sphere) of center $M_T$ and radius $\theta_* h_T$.
	
	\item \textbf{M2} (Edges or faces). There exists a positive constant $l_*$ such that: every element $T\in\mathcal{T}_h$, the distance between any two vertexes  is no less than $  l_* h_T$.
\end{itemize}

The regularity parameter of $\mathcal{T}_h$ is defined by $\rho:=\max\limits_{T\in \mathcal{T}_h}\{h_T ^d /|T| \}$, where $|T|$ is the $d$-dimension Lebesgue measure of $T$. Let $\mathcal{F}_h$ denote the set of all edges/faces of $\mathcal{T}_h$, and set $\partial \mathcal{T}_h:=\{ \partial T\ :\ T\in \mathcal{T}_h \}$. 

Based on the subdivision $\mathcal{T}_h$, we   introduce an auxiliary simplicial mesh $\mathcal{T}_h^*$ as follows:

\begin{itemize}
	\item When $d=2$, for any $T\in\mathcal{T}_h$,
	we connect $M_T$ and all $T$'s vertexes to divide $T$ into a set of triangles, denoted by $w(T)$.
	
	\item When $d=3$,  for any $T, T'\in \mathcal{T}_h$ and every face $F\subset \partial T\cap \partial T'$, we choose any vertex $A$ on $F$ and connect A to the rest of $F$'s vertexes to get a set of triangles, $v(F)$, and if $F\cap \partial \Omega \neq \varnothing$, we can get a set of triangles $v(F)$ by the same way. Finally we connect $M_T$ and every $v(F)$ to get a set of tetrahedrons, $w(T)$.
	
	\item We set $\mathcal{T}^*_h:=\bigcup_{T\in\mathcal{T}_h}w(T)$ for $d=2,3$. We note that $\mathcal{T}_h^*$ is shape regular due to \textbf{M1} and \textbf{M2}.
\end{itemize}

 For any $T\in \mathcal{T}_h,$ set
 $\partial T^*:=\bigcup_{T'\in w(T) }  \left\{\partial T' \cap \partial T\right\}, $ and define
 $$ \partial \mathcal{T}_{h^*}:=\{ \partial T^*: \ T\in \mathcal{T}_h\},$$ 
 $$ \mathcal{F}_h^*:=\left\{ F: F \text{ is an edge/face of }\mathcal{T}_h^*  \text{ and $F\subset F'$ for some $F'\in \mathcal{F}_h$} \right\}.$$ 
 Notice that when $d=2$ or $\mathcal{T}_h$ is a tetrahedron mesh for $d=3$, it holds 
 $$\partial T^*= \partial T,  \quad \partial \mathcal{T}_{h^*}=\partial\mathcal{T}_h, \quad \mathcal{F}_h^*=\mathcal{F}_h.$$  
And, when $d=3$ and $\mathcal{T}_h$ is a polyhedral mesh, $\partial T^*$ is the set of  triangles, into which each face $F\subset \partial T$ is subdivided.
%We define the mesh-dependent inner product $ <\cdot,\cdot> _h$ and the corresponding norm
%$||\cdot||_h$
%as follows:
%for any $\lambda,\mu \in L^2(\mathcal{F}_h)$,
%\begin{equation}
%<\lambda,\mu>_h:=\sum_{T\in\mathcal{T}
%_h}h_T\int_{\partial T}\lambda \mu ,~~~~~||\lambda||_h:=<\lambda,\lambda>_h^\frac{1}{2}.
%\end{equation}

We also need the broken Soblev space
$$H^s (\mathcal{T}_h) :=\{v\in L^2 (\Omega): v|_T \in H^s (T) ,\forall T \in \mathcal{T}_h\},$$
with  the norm $\|\cdot\|_{s,\mathcal{T}_h} $ defined by
$$\|v\|_{s,\mathcal{T}_h} ^2 :=\sum\limits_{T\in \mathcal{T}_h} \|v\|_{s,T} ^2,\quad \forall v\in H^s (\mathcal{T}_h) .$$
The broken Soblev space $H^s(\mathcal{T}_h^*)$ is defined similarly.

%We also need the following notations:\\
%\begin{equation}
%{||\lambda||_{h,\partial T}}
%:={h_T}^{\frac{1}{2}}||\lambda||_{\partial T},
%\forall \lambda \in L^2(\partial T).
%\end{equation}
%\begin{equation}
%{|\mu|_h}:={({\sum _{T\in \mathcal{T}_h}}|\mu|_{h,\partial T})}^{\frac{1}{2}},
%\forall \mu \in L^2(\mathcal{F}_h).
%\end{equation}
%where
%$~~~~~~~~~~~~~~~~~~~~
%{{|\mu|_{h,\partial T}}^2}:=
%{h_T}^{-1}{||\mu-{m_T}(\mu)||_{\partial T}}^2,
%~~~~~~
%{m_T}(\mu):={\frac{1}{|\partial T|}}\int_{\partial T}{\mu},$
%and $|\partial T|$ denotes the ($d$-1)-dimension Hausdorff measure of $\partial T$.

Throughout this paper, $x\lesssim y$($x\gtrsim y$) means
x$\leq Cy$($x\geq Cy$), where C denotes a positive constant that only depends on $d$, $k$, $\Omega$, the regularity parameter $\rho$, and the coefficient matrix $\bm c$. The notation $x\sim y$ abbreviates
$x\lesssim y\lesssim x$.

\section{ EDG method}

 For any $T\in \mathcal{T}_h$ and $F\in \mathcal{F}_h^*$, let  $V(T)\subset L^2(T)$, $\mathbf{W}(T)\subset [L^2(T)]^d$ and $M(F)\subset L^2(F)$ be local finite dimensional spaces. Then we define
\begin{eqnarray}
V_h  &:= & \{v_h\in L^2(\Omega):v_h|_T \in V(T),\forall T\in \mathcal{T}_h\},\\
%\end{equation}
%\begin{equation}
\mathbf{W}_h &:=&\{\bm \tau_h \in [L^2(\Omega)]^d :\bm \tau_h|_T \in \mathbf{W}(T),\forall T\in \mathcal{T}_h\},\\
%\end{equation}
%\begin{equation}
{M}_h &:=&\{\mu_h \in L^2(\mathcal{F}_h^*):\mu_h|_F\in M(F),\forall F\in \mathcal{F}_h^* \},\\
%\end{equation}
%\begin{equation}
\widetilde{M}_h& :=&\{\mu_h \in C^0(\mathcal{F}_h^*):\mu_h|_F\in M(F),\forall F\in \mathcal{F}_h^* \},\\
\widetilde{M}_h(g) &:=&\{\mu_h \in \widetilde{M}_h:\mu_h|_{\partial \Omega}= \Pi_h^\partial g \},
\end{eqnarray}
where $\Pi_h^{\partial}$ is   a continuous interpolation operator from $ L^2(\partial \Omega)$ to $ C^0(\partial\Omega)\cap P_{k+1}(\mathcal{F}_h^*\cap \partial\Omega)$, which will be defined in the next section. 

Then the  variational formulations of the EDG method are given as follows:
Seek $(u_h,\widetilde {\lambda}_h,\bm \sigma_h)\in V_h\times \widetilde {M}_h (g) \times \mathbf{W}_h$ such that
\begin{eqnarray} \label{model1}
(\bm c \bm \sigma_h,\bm \tau_h)+(u_h,\text{div}_h \bm \tau_h)-\sum_{T\in\mathcal{T}_h}\langle\widetilde {\lambda}_h,\bm \tau_h\cdot n\rangle_{\partial T^*}=0&\forall \bm \tau_h\in \mathbf{W}_h,
\\
 \label{model2}
-(v_h,\text{div}_h\bm \sigma_h)+\sum_{T\in \mathcal{T}_h} \langle\alpha_T (u_h-\widetilde {\lambda}_h),v_h\rangle_{\partial T^*}=(f,v_h)&\forall v_h\in V_h,
\\
\label{model3}
\sum_{T\in \mathcal{T}_h} \langle \bm \sigma_h \cdot n -\alpha_T(u_h-\widetilde {\lambda}_h),\widetilde{\mu}_h \rangle_{\partial T^*}=0&\forall \widetilde {\mu}_h\in \widetilde {M}_h(0).
\end{eqnarray}
Here the broken operator $\text{div}_h$ is defined    by $\text{div}_h \bm \tau_h|_T :=\text{div}(\bm \tau_h|_T)$   for any $\bm \tau_h\in \mathbf{W}_h,T\in \mathcal{T}_h$.

In this paper we choose the local spaces $V(T)$, $M(F)$, $\mathbf{W} (T)$ and the penalty parameter $\alpha_T$ as following: for integer $k\geq 0$,
\begin{eqnarray}\label{local spaces}
V(T)\ &=&\ P_{k+1} (T),~~~~~~M(F)\ =\ P_{k+1} (F), ~~~~~\mathbf{W} (T)\ =\ [P_k (T)]^d,\\
%\end{equation}
%\begin{equation}
\label{penalty parameter}
\alpha_T |_F\ &=&\ h_T ^{-1},~~\forall\  face\ F\ of\ T.
\end{eqnarray}
%In view of \eqref{local spaces}, the finite dimensional spaces $V_h$, $\mathbf{W}_h$, and $\widetilde{M}_h$ can be rewritten as
%\begin{eqnarray}
%V_h  &= & \{v_h\in L^2(\Omega):v_h|_T \in P_{k+1} (T),\ \forall T\in \mathcal{T}_h\},\\
%\mathbf{W}_h &=&\{\bm \tau_h \in [L^2(\Omega)]^d :\bm \tau_h|_T \in [P_k (T)]^d,\ \forall T\in \mathcal{T}_h\},\\
%\widetilde{M}_h &=&\{\mu_h \in C^0(\mathcal{F}_h^*):\mu_h|_F\in P_{k+1} (F),\ \forall F\in \mathcal{F}_h^* \}.
%\end{eqnarray}

We have the following existence and uniqueness result:
\begin{lem}
	The EDG method (\ref{model1})-(\ref{model3}) admits a unique solution $(u_h,\widetilde\lambda_h,\bm \sigma_h)\in V_h\times \widetilde{M}_h (g) \times \mathbf{W}_h$.
\end{lem}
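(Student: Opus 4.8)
The plan is to use that \eqref{model1}--\eqref{model3} is a square linear system, so that existence of a solution is equivalent to uniqueness; it therefore suffices to show that the homogeneous problem, with $f=0$ and $g=0$, admits only the trivial solution. Accordingly I would fix $f=0$ and $g=0$, note that then $\widetilde\lambda_h\in\widetilde M_h(0)$, and aim to prove $\bm\sigma_h=0$, $u_h=0$ and $\widetilde\lambda_h=0$.

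The first step is to derive an energy identity. I would take $\bm\tau_h=\bm\sigma_h$ in \eqref{model1}, $v_h=u_h$ in \eqref{model2}, and $\widetilde\mu_h=\widetilde\lambda_h$ in \eqref{model3} (admissible precisely because $g=0$ gives $\widetilde\lambda_h\in\widetilde M_h(0)$). Adding the first two equations cancels the volume terms $\pm(u_h,\text{div}_h\bm\sigma_h)$, while \eqref{model3} lets me rewrite the remaining trace pairing $\sum_{T}\langle\widetilde\lambda_h,\bm\sigma_h\cdot n\rangle_{\partial T^*}$ as $\sum_{T}\langle\alpha_T(u_h-\widetilde\lambda_h),\widetilde\lambda_h\rangle_{\partial T^*}$. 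After regrouping, the surviving terms collapse into
\[
(\bm c\bm\sigma_h,\bm\sigma_h)+\sum_{T\in\mathcal{T}_h}\langle\alpha_T(u_h-\widetilde\lambda_h),\,u_h-\widetilde\lambda_h\rangle_{\partial T^*}=0 .
\]
Since $\bm c$ is uniformly positive definite and $\alpha_T=h_T^{-1}>0$ by \eqref{penalty parameter}, both contributions are nonnegative, forcing $\bm\sigma_h=0$ in $\Omega$ and $u_h=\widetilde\lambda_h$ on $\partial T^*$ for every $T\in\mathcal{T}_h$.

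The second step recovers $u_h$. With $\bm\sigma_h=0$, equation \eqref{model1} reduces to $(u_h,\text{div}_h\bm\tau_h)-\sum_{T}\langle\widetilde\lambda_h,\bm\tau_h\cdot n\rangle_{\partial T^*}=0$ for all $\bm\tau_h\in\mathbf{W}_h$. Integrating by parts elementwise (using that $\partial T^*$ and $\partial T$ coincide as sets, so the surface integrals agree for the polynomial traces) rewrites this as $-\sum_{T}(\nabla u_h,\bm\tau_h)_T+\sum_{T}\langle u_h-\widetilde\lambda_h,\bm\tau_h\cdot n\rangle_{\partial T^*}=0$, and the boundary term vanishes because $u_h=\widetilde\lambda_h$ on $\partial T^*$. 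The crucial point — and the step I expect to demand the most care — is that the degree choice \eqref{local spaces}, namely $V(T)=P_{k+1}(T)$ and $\mathbf{W}(T)=[P_k(T)]^d$, guarantees that the broken gradient $\nabla_h u_h$, defined by $(\nabla_h u_h)|_T=\nabla(u_h|_T)\in[P_k(T)]^d$, lies in $\mathbf{W}_h$; choosing $\bm\tau_h=\nabla_h u_h$ then yields $\sum_{T}\|\nabla u_h\|_T^2=0$, so $u_h$ is constant on each element.

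Finally I would pin down the constant using the defining EDG feature, the continuity of the multiplier. Since $u_h$ is elementwise constant and equals $\widetilde\lambda_h$ on each $\partial T^*$, the single-valuedness and continuity of $\widetilde\lambda_h\in\widetilde M_h(0)\subset C^0(\mathcal{F}_h^*)$ force the element constants to agree across shared faces; by connectedness of $\mathcal{T}_h$, $u_h$ is a global constant. The homogeneous boundary condition $\widetilde\lambda_h|_{\partial\Omega}=0$ then fixes this constant to be zero, whence $u_h=0$ and consequently $\widetilde\lambda_h=0$ on $\mathcal{F}_h^*$. This shows the homogeneous system has only the trivial solution, and hence that \eqref{model1}--\eqref{model3} possesses a unique solution in $V_h\times\widetilde M_h(g)\times\mathbf{W}_h$.
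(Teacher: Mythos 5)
Your proof is correct and follows essentially the same route as the paper's: the same energy identity obtained by testing with $(\bm\sigma_h,u_h,\widetilde\lambda_h)$, the same choice $\bm\tau_h=\nabla_h u_h$ (which the paper makes directly in \eqref{model1} rather than after integrating by parts) to force $u_h$ piecewise constant, and the same use of the continuity of $\widetilde\lambda_h$ together with the boundary condition to conclude $u_h=\widetilde\lambda_h=0$. Your version merely spells out two details the paper leaves implicit, namely that $\nabla(u_h|_T)\in[P_k(T)]^d=\mathbf{W}(T)$ and that connectedness propagates the element constants.
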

\begin{proof}
	It suffices  to prove the uniqueness, or equivalently,   to show that the system has the trivial solution when $f=g=0$.
	
	In fact, $f=g=0$ implies $\widetilde{\lambda}_h\in \widetilde{M}_h^0$. By taking $(\bm \tau_h,v_h,\widetilde{\mu}_h)=(\bm \sigma_h,u_h,\widetilde{\lambda}_h)$ in \eqref{model1}-\eqref{model3},  and summing all equations together, one can obtain
	\begin{align*}
	(\bm c\bm \sigma_h,\bm \sigma_h)+\sum_{T\in \mathcal{T}_h}\alpha_T|| u_h-\widetilde{\lambda}_h ||_{\partial T^*}^2=0.
	\end{align*}
		Since $\bm c$ is uniformly positive and $\alpha_T$ is nonnegative, the above equation implies $\bm \sigma_h=0$ and
	$ u_h=\widetilde{\lambda}_h$ on $\partial T$ for all $T\in \mathcal{T}_h$. Then, 	
  taking $\bm \tau_h=\nabla u_h$ in \eqref{model1} yields
	\begin{equation*}
	0=(u_h,\text{div}_h \nabla u_h)_{\mathcal{T}_h}-\sum_{T\in\mathcal{T}_h}\langle\widetilde {\lambda}_h,\nabla u_h\cdot n\rangle_{\partial T^*}=-(\nabla u_h, \nabla u_h).
	\end{equation*}
This means $\nabla u_h=0$ on each  $T\in \mathcal{T}_h$, i.e. $u_h$ is piecewise constant. 
	Recalling that  $u_h=\widetilde{\lambda}_h$  on $\mathcal{F}_h^*$ and $\widetilde{\lambda}_h=0$ on $\partial\Omega$, we  finally  obtain $u_h=0$  and $\widetilde{\lambda}_h=0$.
\end{proof}

Introduce the following two local problems:

For any $T\in \mathcal{T}_h$ and $\lambda_h \in L^2(\partial T^*)$, seek $(u_{\lambda_h} ,\bm \sigma_{\lambda_h})\in V(T)\times \mathbf{W} (T)$ such that
\begin{eqnarray} \label{local problem11}
(\bm c \bm \sigma_{\lambda_h} ,\bm \tau_h)_T +(u_{\lambda_h} ,\text{div} \bm \tau_h)_T =\langle\lambda_h ,\bm \tau_h \cdot n\rangle_{\partial T^*}&\forall \bm \tau_h \in \mathbf{W}(T),
\\
 \label{local problem12}
-(v_h,\text{div}\bm \sigma_{\lambda_h})_T +\langle\alpha_T u_{\lambda_h},v_h\rangle_{\partial T} = \langle\alpha_T \lambda_h,v_h \rangle_{\partial T^*}&\forall v_h\in V(T).
\end{eqnarray}

For any $T\in \mathcal{T}_h$ and $f \in L^2 (T)$, seek $(u_f ,\bm \sigma_f)\in V(T)\times \mathbf{W} (T)$ such that
\begin{eqnarray} \label{local problem21}
(\bm c \bm \sigma_f ,\bm \tau_h)_T +(u_f ,\text{div}\bm \tau_h)_T =0&\forall  \bm \tau_h \in \mathbf{W}(T),
\\
  \label{local problem22}
-(v_h,\text{div}\bm \sigma_f)_T +\langle\alpha_T u_f,v_h\rangle_{\partial T} = (f,v_h)_T&\forall v_h\in V(T).
\end{eqnarray}

Similar to the HDG method, after the local elimination of   unknowns $u_h$ and $\bm\sigma_h$,   the EDG method  leads to the following reduced system: seek $\widetilde{\lambda}_h \in \widetilde{M}_h (g)$ such that
\begin{equation}
a_h(\widetilde{\lambda}_h,\widetilde{\mu}_h)=(f,v_{\widetilde{\mu}_h})~~~\forall \widetilde{\mu}_h \in \widetilde {M}_h(0).
\end{equation}
where $a_h(\cdot,\cdot):\widetilde{M}_h(g)\times \widetilde{M}_h^0\longrightarrow
\mathbb{R}$ is defined by
\begin{equation} \label{bilinear_form}
a_h(\widetilde{\lambda}_h,\widetilde{\mu}_h):=\sum_{T\in \mathcal{T}_h} (\bm c \bm \sigma_{{\widetilde{\lambda}_h}},\bm \sigma_{\widetilde{\mu}_h})_T +\sum_{T\in \mathcal{T}_h} \langle\alpha_T (u_{{\widetilde{\lambda}_h}}-\widetilde{\lambda}_h),u_{{\widetilde{\mu}_h}}-\widetilde{\mu}_h\rangle_{\partial T^*}.
\end{equation} 

\begin{rem}
	Follow from \cite{BJRUH}, we can define an HDG method: Seek $(u_h,\lambda_h,\bm \sigma_h)\in V_h\times \widehat M_h (g) \times \mathbf{W}_h$ such that 
\begin{eqnarray*}%\label{HDG model1}
(\bm c \bm \sigma_h,\bm \tau_h)+(u_h,\text{div}_h \bm \tau_h)-\sum_{T\in\mathcal{T}_h}\langle\lambda_h,\bm \tau_h\cdot n\rangle_{\partial T}=0&\forall \bm \tau_h\in \mathbf{W}_h,
\\
-(v_h,\text{div}_h\bm \sigma_h)+\sum_{T\in \mathcal{T}_h} \langle\alpha_T (u_h-\lambda_h),v_h\rangle_{\partial T}=(f,v_h)&\forall v_h\in V_h,
\\
\sum_{T\in \mathcal{T}_h} \langle \bm \sigma_h \cdot n -\alpha_T(u_h-\lambda_h),\mu_h\rangle_{\partial T}=0&\forall \mu_h\in \widehat M_h (0).
\end{eqnarray*}
Here $$\widehat{M}_h(g) :=\{\mu \in L^2(\mathcal{F}_h):\mu|_F\in P_{k+1}(F),\forall F\in \mathcal{F}_h \text{ and } \ \langle \mu,\eta \rangle_{F}=\langle g,\eta \rangle _{F} \text{ if }F\subset \partial \Omega, \forall \eta \in P_{k+1}(F)  \}.$$
\end{rem}
\begin{rem}
	We can see that the EDG method is a modification of   the corresponding HDG method by simply replacing the discontinuous numerical trace space $\widehat M_h (g)$ with the  continuous trace space $\widetilde M_h (g)$. In particular, when $d=2$ or  $\mathcal{T}_h$ is a tetrahedron mesh,   $\widetilde M_h (g)$ is much smaller than $\widehat M_h (g)$. In such cases,  the EDG method  leads to a smaller system than the corresponding HDG method. 
%	
%	 on the whole inter-element boundaries $\mathcal{F}_h^*$. Since the only degrees of freedom that are globally coupled are precisely those of the numerical traces, see \eqref{bilinear_form}, such reduction leads to smaller computational cost of an EDG method  than that of the corresponding HDG method.  See the comparison of degrees of freedom at Table \ref{tab:ex3}.
\end{rem}

\section{Error analysis}
%\subsection{Main Results}
This section is devoted to the estimation of the flux  error $\bm  \sigma-\bm  \sigma_h$ and the potential error $u-u_h$ for the  EDG scheme \eqref{model1}-\eqref{model3}.  In subsections 4.1 and 4.2 we carry out the analysis for the flux and potential approximations, respectively on 2D/3D polygon meshes. 
%For the simplification of notations, we first prove our estimation on simplicial mesh in the first two subsections, and discuss how to analyze on polygonal mesh in 2D in the last subsection.

%\subsection{Preliminary Material for the proof of Theorem \ref{mainth}}
\subsection{Estimation for flux approximation}

This subsection is devoted to the error estimation of the  flux approximation $\bm  \sigma_h$ for the EDG scheme \eqref{model1}-\eqref{model3}.

Let $P_V : L^2 (\Omega)\longrightarrow V_h$, $P_{\mathbf{W}}: [L^2 (\Omega) ]^d \longrightarrow \mathbf{W}_h$, and $P_M : L^2(\mathcal{F}_h^*) \longrightarrow M_h$ be the standard $L^2$-orthogonal projection operators.  Then the following estimates are standard. % for general triangulation $\mathcal{T}_h$. % where $u$ in $P_M u$ is understood as its trace on $\mathcal{F}_h$ for $u\in H^1 (\Omega)$.
 \begin{lem} \label{lemma_L_2}
	For any $T\in \mathcal{T}_h$ and $(v, \bm \tau)\in H^{k+2} (T)\times [H^{k+1} (T)]^d$,  it holds
	\begin{equation} \label{projection_error}
	\begin{split}
	\| v-P_V v \|_T+h_T^{\frac{1}{2}}\|  v-P_V v \|_{\partial T}&\lesssim h_T^{k+2} | v |_{k+2,T},	\\
	\| \bm \tau-P_{\mathbf{W}} \bm \tau \|_T+h_T^{\frac{1}{2}}\| \bm \tau-P_{\mathbf{W}} \bm \tau \|_{\partial T}&\lesssim h_T^{k+1} | \bm \tau |_{k+1,T},\\
	\| v-P_M v \|_{\partial T^*} &\lesssim h_T^{k+\frac{3}{2}} | v |_{k+2,T}.
	\end{split}
	\end{equation}
\end{lem}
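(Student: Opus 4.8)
The plan is to reduce all three bounds to two classical ingredients adapted to polytopal elements: a Bramble--Hilbert (Dupont--Scott) polynomial approximation estimate on star-shaped domains, together with scaled trace inequalities. The crucial point is that, unlike on simplicial meshes, a general element $T$ is not affine-equivalent to a single reference element, so one cannot argue by a naive scaling; instead the star-shapedness assumption \textbf{M1} guarantees that the chunkiness parameter of each $T$ is controlled by $\theta_*$, which makes the Dupont--Scott theory applicable with constants depending only on $d$, $k$ and $\rho$. Concretely, for $v\in H^{k+2}(T)$ I would fix a polynomial $\pi v\in P_{k+1}(T)$ (an averaged Taylor polynomial over the ball of radius $\theta_*h_T$ about $M_T$) satisfying
\begin{equation*}
\|v-\pi v\|_{m,T}\lesssim h_T^{k+2-m}|v|_{k+2,T},\qquad m=0,1,
\end{equation*}
and analogously a polynomial $\pi\bm\tau\in[P_k(T)]^d$ for the flux.

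For the first estimate I would first bound the volume term by the $L^2$-optimality of the projection, $\|v-P_Vv\|_T\le\|v-\pi v\|_T\lesssim h_T^{k+2}|v|_{k+2,T}$. For the boundary term I split $v-P_Vv=(v-\pi v)+(\pi v-P_Vv)$. The first piece is handled by the continuous scaled trace inequality $\|w\|_{\partial T}\lesssim h_T^{-1/2}\|w\|_T+h_T^{1/2}|w|_{1,T}$, which on a polytope I would obtain by summing the standard simplicial trace inequality over the shape-regular sub-simplices of $w(T)$; this yields $\|v-\pi v\|_{\partial T}\lesssim h_T^{k+3/2}|v|_{k+2,T}$. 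The second piece is a polynomial, so the discrete (inverse) trace inequality $\|p\|_{\partial T}\lesssim h_T^{-1/2}\|p\|_T$ for $p\in P_{k+1}(T)$ applies, and $\|\pi v-P_Vv\|_T\le\|v-\pi v\|_T+\|v-P_Vv\|_T\lesssim h_T^{k+2}|v|_{k+2,T}$ by the triangle inequality. Multiplying by $h_T^{1/2}$ gives the stated bound, and the flux estimate is verbatim the same argument with $P_k$ in place of $P_{k+1}$, shifting the regularity index by one.

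For the face estimate I would exploit that $P_M$ reproduces polynomials of degree $\le k+1$ on each $F\in\mathcal{F}_h^*$ and that the trace $\pi v|_F$ of the degree-$(k+1)$ polynomial $\pi v$ is itself in $P_{k+1}(F)$, so $P_M(\pi v)=\pi v$ on $\partial T^*$. Hence $v-P_Mv=(v-\pi v)-P_M(v-\pi v)$, and the $L^2(\partial T^*)$-stability of $P_M$ gives $\|v-P_Mv\|_{\partial T^*}\le 2\|v-\pi v\|_{\partial T^*}$. Since $\partial T^*$ and $\partial T$ cover the same geometric boundary, one has $\|\cdot\|_{\partial T^*}=\|\cdot\|_{\partial T}$, and the trace estimate above yields $\|v-\pi v\|_{\partial T}\lesssim h_T^{k+3/2}|v|_{k+2,T}$, which is the claim.

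The only genuinely nontrivial step is establishing the two geometric inequalities (Bramble--Hilbert and the continuous trace inequality) uniformly over the whole polytopal family, since both fail without shape regularity. I expect this to be the main obstacle, but it is resolved precisely by \textbf{M1}--\textbf{M2}: \textbf{M1} controls the chunkiness parameter needed for Dupont--Scott, while the auxiliary mesh $\mathcal{T}_h^*$, shape-regular by \textbf{M1}--\textbf{M2}, reduces the trace inequality on $T$ to a summation of simplicial trace inequalities over $w(T)$.
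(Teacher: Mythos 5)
Your proof is correct: the paper itself gives no argument for this lemma (it simply declares the estimates standard), and what you have written out --- the Dupont--Scott averaged Taylor polynomial on star-shaped elements, combined with continuous and discrete scaled trace inequalities obtained by summing over the shape-regular sub-simplices $w(T)$ of the auxiliary mesh $\mathcal{T}_h^*$, plus $L^2$-optimality of the projections and the polynomial-reproduction property of $P_M$ on the flat sub-faces of $\partial T^*$ --- is exactly the standard machinery the paper is implicitly invoking. In particular, your handling of the two points where polytopal elements genuinely differ from simplices (no affine reference map, hence Bramble--Hilbert via the chunkiness parameter controlled by \textbf{M1}; trace and inverse inequalities reduced to simplicial ones over $\mathcal{T}_h^*$, shape-regular by \textbf{M1}--\textbf{M2}) is the right one, and the identification $\|\cdot\|_{\partial T^*}=\|\cdot\|_{\partial T}$, which lets the third estimate follow from the face-wise optimality of $P_M$ and the trace bound already established, is valid because $\partial T^*$ is merely a repartition of the same geometric boundary.
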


%For these $L^2$-orthogonal projection operators, we have the following classical results.
%
% Let $P_T ^{k+1}: L^2(T)\longrightarrow P_{k+1} (T)$ be the standard $L^2$ orthogonal projection operator. Then we have the following standard estimate:
%\begin{equation} \label{projection error}
%||v-P_T ^{k+1} v||_T +h^\frac{1}{2} ||v-P_T ^{k+1} v||_{\partial T}\lesssim h_T ^{k+2} ||v||_{k+2,T},\quad  \forall v\in H^{k+2} (T), T \in \mathcal{T}_h.
%\end{equation}
%Define $P_h :H^{k+2} (\mathcal{T} _h)\longrightarrow V_h $, such that, for any $v\in H^{k+2} (\mathcal{T} _h)$,
%\begin{equation}
%P_h v |_T =P_T ^{k+1} (v|_T)~~~~\forall T\in \mathcal{T}_h.
%\end{equation}

 For 
any $d$-simplex element $  T\in \mathcal{T}_h$ with vertices
$\bm a_j =(x_{1j},x_{2j},...x_{dj})^T \ (1\leq j \leq d+1),$
denote by
\begin{equation}
	S_{ T}:=\{ \bm x| \bm x=\sum_{i=1}^{d+1} \frac{j_i}{k+1} \bm a_i ,\sum_{i=1}^{d+1} j_i =k+1, j_i \in \{ 0,1,...,k+1\}, 1\leq i \leq d+1\}
\end{equation}
 the set of nodes of $T$ and by 
   $S_{\mathcal{T}_h}:=\bigcup\limits_{T\in \mathcal{T}_h}S_{ T} $ 
the set of nodes of $ \mathcal{T}_h$. Note that $S_T$ is   the set of nodes for the $C^0$  Lagrange finite element of order $k+1$. We let $\bm a_{i,T}$ be a node of Lagrange finite element of $S_T$ and $\bm a_{i,F}\in S_{\mathcal{T}_h}$, which lays on some edge/face $F\in \mathcal{F}_h$.

Given a point $\bm a\in \mathbb{R}^d$, we define 
\begin{align*}
	S_h ^c (\bm a) &:= \{T:\  T\in \mathcal{T}_h,\bm a\in T \}, \\
	S_h ^F (\bm a) &:= \{F:\  F\in \mathcal{F}_h^*\cap \partial\Omega,\bm a\text{ is on } F \}, 
\end{align*}
and let $\# S_h^c(\bm a)$ and $\#S_h^F(\bm a)$ be the number of elements in $S_h^c(\bm a)$ and $S_h^F(\bm a)$, respectively.

Now we define the continuous interpolation operator   $\Pi_h^\partial :L^2 (\partial\Omega) \longrightarrow C^0(\partial\Omega)\cap P_{k+1}(\mathcal{F}_h^* \cap \partial\Omega) $ as follows: For any $g\in L^2(\partial\Omega)$ and  $F\in \mathcal{F}_h^*\cap \partial \Omega$,   $\Pi_h^\partial g|_F\in M(F)$  satisfies
\begin{align*}
	\Pi_h^\partial g|_F(\bm a_{i, F})&= P_M g|_F (\bm a_{i,F}),\qquad \text{ if $\bm a_{i,F}$ is in the interior of }F,\\
	\Pi_h^\partial g|_F(\bm a_{i,F})&= \frac{1}{\# S_h^F(\bm a_{i,F})} \sum_{F'\in S_h^F(a_{i,F})} P_M g|_{F'} (\bm a_{i,F}),~ \text{ if }\bm a_{i,F} \text{ is a vertex of $F$}.
\end{align*}

 Following  Chapter 3 of \cite{SZCWM}, we introduce the projection mean operator   $\Pi_h ^P:L^2 (\Omega) \longrightarrow V_h \bigcap H^1 (\Omega) $, defined as follows: for any $T\in \mathcal{T}_h ,\ u\in L^2(\Omega),\ \Pi_h ^P u |_T \in V (T)$ and
\begin{align*}
\Pi_h ^P u |_T (\bm a_{i,T})&= (P_V u) |_T (\bm a_{i,T})\quad\qquad\qquad\qquad for\ any\ \bm a_{i,T} \text{ in the interior of $T$},\\
\Pi_h ^P u |_T (\bm a_{i,T})&=\frac{1}{\# S_h^c(\bm a_{i,T})} \sum_{T'\in S_h ^c (\bm a_{i,T})} {(P_V u)}|_{T'}(\bm a_{i,T'}) ~~  for\ any\ \bm a_{i,T}\ on\ \partial T, \partial T\cap \partial \Omega=\varnothing\\
\Pi_h ^P u |_T (\bm a_{i,T})&= \Pi_h^\partial g(\bm a_{i,T})\qquad\qquad\qquad\qquad\qquad\qquad\quad for\ any\ \bm a_{i,T}\ on\  \partial \Omega.
\end{align*}

%be  and denote $\bm a_{i,T}$ be the $i$-th point in $S_T ^{l+1}$.
%For any node  $\bm a\in S_{\mathcal{T}_h}$, we define its patch
%as $$\omega_{\bm a}:= \{T\in \mathcal{T}_h: \ \bm a\text{ is a node of } T \}, $$
% and let $\# \omega_{\bm a}$ be the number of elements in $\omega_{\bm a}$.

% (cf. \cite{SZCWM}): 
% for any $ v\in L^2(\Omega)$, the  function $ \Pi_h ^P v\in V_h \bigcap H^1_0 (\Omega) $ is determined by the following conditions:
% \begin{eqnarray} \label{pro_aver_1}
%\Pi_h ^P v (\bm a)&=&\frac{1}{\# \omega_{\bm a}} \sum_{T'\in  \omega_{\bm a}} {(P_V v)}|_{T'}(\bm a )\quad \text{ if  $\bm a\in S_{\mathcal{T}_h}$ is an interior  node  of } \mathcal{T}_h,\\

%  \label{pro_aver_2}
%	\Pi_h ^P v |_T (\bm a_{i,T})&= P_V (v) |_T (\bm a_{i,T})~~~~~~~~\text{ if  $\bm a_{i,T}$ is an interior node  of } T,\\
%\label{pro_aver_e}
%\Pi_h ^P v  (\bm a)&=& 0\quad  \text{ if  $\bm a\in S_{\mathcal{T}_h} $ is on }  \partial \Omega.
%\end{eqnarray}

When $\mathcal{T}_h$ is a polygonal/polyhedral subdivision, we can first define the  projection mean operator $\Pi_{h^*}^P$ on the auxilliary mesh $\mathcal{T}_{h}^*$ whose elements are simplexes. Then we define $\Pi_h^P$ as follows:
\begin{align*}
	\Pi_h^P u(\bm a)= \Pi_{h^*}^P u(\bm a)~~\forall \bm a\in S_{\mathcal{T}_{h^*}}\cap \mathcal{F}_h^*,~\forall u\in H^1(\Omega). 
\end{align*}

%  |_T \in P_{k+1} (T)$ and
% for any $T\in \mathcal{T}_h ,\ v\in L^2(\Omega),\ \Pi_h ^P v |_T \in P_{k+1} (T)$ and
%\begin{eqnarray} \label{pro_aver_1}
%\Pi_h ^P v |_T (\bm a_{i,T})&=\frac{1}{N_h ^c (\bm a_{i,T})} \sum_{T'\in S_h ^c (\bm a_{i,T})} {(P_V v)}|_{T'}(\bm a_{i,T'}) ~~~~~ \text{ if  $\bm a_{i,T}$ is a boundary node  of } T,\\
%  \label{pro_aver_2}
%	\Pi_h ^P v |_T (\bm a_{i,T})&= P_V (v) |_T (\bm a_{i,T})~~~~~~~~\text{ if  $\bm a_{i,T}$ is an interior node  of } T,\\
%\label{pro_aver_e}
%\Pi_h ^P v |_T (\bm a_{i,T})&= 0~~~~~~~~for\ any\ \bm a_{i,T}\ is\ on\  \partial \Omega.
%\end{eqnarray}

From  \cite{SZCWM} we have the following approximation result.
\begin{lem} \label{lemma_pihp}
 For any $u \in H^{k+2} (\mathcal{T}_h)$ and $T \in \mathcal{T}_h $, it holds
 \begin{equation}\label{meanproerro}
 %||v-\Pi_h ^P v||_T + h_T ^\frac{1}{2} ||v-\Pi_h ^P v||_{\partial T} \lesssim h_T ^{k+2} (\sum_{T' \in S_h (T)}|v|_{k+2,T'} ^2)^\frac{1}{2}.
 ||u-\Pi_{h^*}^P u||_{\partial T^*} \lesssim h_T ^{k+\frac{3}{2}} \big(\sum_{T' \in \omega_T}|v|_{k+2,T'} ^2\big)^\frac{1}{2},
 \end{equation}
 where $ \omega_T:=\{ T'\in \mathcal{T}_h:~T'\cap T\neq \emptyset \}$.
\end{lem}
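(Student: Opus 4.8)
The plan is to carry out everything on the auxiliary simplicial mesh $\mathcal{T}_h^*$, on which $\Pi_{h^*}^P$ is actually defined, and to split the interpolation error through the elementwise $L^2$-projection. Let $\pi_h^*$ denote the $L^2$-projection onto the piecewise $P_{k+1}$ space on $\mathcal{T}_h^*$ (the analogue of $P_V$ that enters the nodal definition of $\Pi_{h^*}^P$), and write
$$u-\Pi_{h^*}^P u=(u-\pi_h^* u)-(\Pi_{h^*}^P u-\pi_h^* u)=:(u-\pi_h^* u)-w_h.$$
The first term is a pure projection error: on each simplex $\tau\subset T$ the trace bound $\|u-\pi_h^* u\|_{\partial\tau}\lesssim h_\tau^{k+3/2}|u|_{k+2,\tau}$ holds by the same Bramble--Hilbert-plus-scaling argument as in Lemma \ref{lemma_L_2}, and summing over the simplices of $w(T)$ with $h_\tau\sim h_T$ already yields the desired right-hand side. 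Thus the crux is the correction $w_h$.

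Since $w_h|_\tau\in P_{k+1}(\tau)$, equivalence of norms on this finite-dimensional space together with a scaling to the reference simplex gives
$$\|w_h\|_{\partial\tau}^2\sim h_\tau^{d-1}\sum_{\bm a\in S_\tau}|w_h(\bm a)|^2,$$
where $S_\tau$ is the set of Lagrange nodes of $\tau$, reducing the estimate to the nodal values. By the definition of $\Pi_{h^*}^P$, one has $w_h(\bm a)=0$ at nodes interior to $\tau$, whereas at a shared node $\bm a$
$$w_h(\bm a)=\frac{1}{\#S_h^c(\bm a)}\sum_{\tau'\ni\bm a}\bigl(\pi_h^* u|_{\tau'}(\bm a)-\pi_h^* u|_\tau(\bm a)\bigr),$$
an average of the pointwise jumps of $\pi_h^* u$ at $\bm a$; the boundary nodes are treated identically, with the average replaced by $\Pi_h^\partial g(\bm a)$ and controlled through the approximation property of $\Pi_h^\partial$ (taking $g=u|_{\partial\Omega}$).

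To estimate a jump I would insert the single value $u(\bm a)$, which is legitimate because $H^{k+2}(\tau')\hookrightarrow C^0(\overline{\tau'})$ for $k\geq 0$, $d\le 3$, so that
$$\pi_h^* u|_{\tau'}(\bm a)-\pi_h^* u|_\tau(\bm a)=(\pi_h^* u-u)|_{\tau'}(\bm a)-(\pi_h^* u-u)|_\tau(\bm a),$$
and each term is bounded by $\|u-\pi_h^* u\|_{L^\infty(\tau')}\lesssim h_{\tau'}^{k+2-d/2}|u|_{k+2,\tau'}$ (again Bramble--Hilbert and scaling, now using the embedding). Substituting into the nodal sum, and using that the number of nodes of $\tau$ and the number of simplices meeting a node are bounded by shape regularity, one gets
$$\|w_h\|_{\partial\tau}^2\lesssim h_\tau^{d-1}\,h_\tau^{2(k+2)-d}\sum_{\tau'\in\omega_\tau}|u|_{k+2,\tau'}^2=h_\tau^{2k+3}\sum_{\tau'\in\omega_\tau}|u|_{k+2,\tau'}^2,$$
with $\omega_\tau$ the patch of simplices surrounding $\tau$. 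Summing over $\tau\in w(T)$, and observing that every simplex occurring in these node patches is contained in some original element $T'\in\omega_T$, so that $\sum_\tau|u|_{k+2,\tau}^2\le\sum_{T'\in\omega_T}|u|_{k+2,T'}^2$, delivers \eqref{meanproerro}.

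I expect the main obstacle to be the nodal-jump step: showing that the averaging built into $\Pi_{h^*}^P$ converts the element-to-element discrepancies of $\pi_h^* u$ into quantities controlled by the local projection error. This requires single-valuedness of $u$ at the nodes---hence genuine continuity of $u$ rather than merely the broken regularity $u\in H^{k+2}(\mathcal{T}_h)$---and careful bookkeeping to pass from the auxiliary-simplex patches $\omega_\tau$ to the original-element patch $\omega_T$ while keeping all constants dependent only on $d$, $k$, and the regularity parameter $\rho$.
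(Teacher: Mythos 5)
The paper offers no proof of this lemma at all: it is simply quoted from Chapter 3 of \cite{SZCWM}, so your argument cannot be compared line by line with anything in the text. What you have written is the standard proof of error bounds for such nodal-averaging (projection-mean, Oswald-type) operators, and its skeleton is the right one: split off the elementwise $L^2$-projection error, reduce the correction $w_h$ to its Lagrange nodal values by scaling, observe that interior nodal values vanish, and bound the averaged nodal jumps by local $L^\infty$ projection errors via $H^{k+2}\hookrightarrow C^0$ for $d\le 3$. The exponent bookkeeping $h_\tau^{d-1}\cdot h_\tau^{2(k+2)-d}=h_\tau^{2k+3}$ is correct, the passage from the simplex patches $\omega_\tau$ to the original-element patch $\omega_T$ is handled properly, and your closing remark is a genuine catch: with only the broken regularity $u\in H^{k+2}(\mathcal{T}_h)$ stated in the lemma, the estimate is false (a piecewise constant with a jump makes the right-hand side vanish while the left-hand side does not), so single-valuedness of $u$ at the nodes --- i.e.\ $u\in H^1(\Omega)$, which holds in the paper's actual application and is implicit in its definition of $\Pi_h^P$ on $H^1(\Omega)$ --- must be added to the hypotheses, exactly as you say.

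Two local repairs are needed. First, the claimed two-sided equivalence $\|w_h\|_{\partial\tau}^2\sim h_\tau^{d-1}\sum_{\bm a\in S_\tau}|w_h(\bm a)|^2$ fails in the direction $\gtrsim$ once $k+1\ge d+1$: the bubble function vanishes on $\partial\tau$ but not at the interior nodes. You only ever use the direction $\lesssim$, which does hold, so this is a harmless misstatement --- but state only that direction. Second, and more substantively, the boundary nodes are \emph{not} ``treated identically'': an $L^\infty$ bound for the face projection error $g-P_Mg$ of the form you use elsewhere would require $|g|_{k+2,F}$, whereas $g=u|_{\partial\Omega}$ inherits from $u|_T\in H^{k+2}(T)$ only $H^{k+3/2}(F)$ regularity on a boundary face, so ``the approximation property of $\Pi_h^\partial$'' cannot be invoked as written. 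The fix stays inside your framework: since the trace of a $P_{k+1}(\tau')$ polynomial on a planar face $F$ lies in $P_{k+1}(F)$ and is therefore fixed by $P_M$, one has $P_Mg(\bm a)-\pi_h^*u|_{\tau'}(\bm a)=P_M\bigl((u-\pi_h^*u)|_F\bigr)(\bm a)$, and the inverse inequality together with $L^2$-stability of $P_M$ and the scaled trace estimate gives $|P_Mg(\bm a)-\pi_h^*u|_{\tau'}(\bm a)|\lesssim h^{-(d-1)/2}\|u-\pi_h^*u\|_{F}\lesssim h^{k+3/2-(d-1)/2}|u|_{k+2,\tau'}$, which after multiplication by $h^{d-1}$ restores the $h^{2k+3}$ rate at boundary nodes as well.
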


%\begin{rem}
%	When $d=2$ and each element is arbitrary polygon, the restriction of $\Pi_h v$ on $\mathcal{F}_h$ belongs to $\overset{\sim}{M}_h$. Therefore, our error analyses of EDG methods can be applied directly to this case.
%\end{rem}

With the above projection operators, we set
\begin{align}
\delta^{\bm \sigma}&:= \bm \sigma-P_{\mathbf{W}} \bm \sigma,~~ \delta^u:=u- P_V u,~~~ \delta^{\widetilde{\lambda}}:=u- \Pi_h  u,\nonumber\\
e_h ^{\bm \sigma} &:= P_{\mathbf{W}} \bm \sigma -\bm \sigma_h,\ e_h ^u := P_V u -u_h,~\ e_h ^{\widetilde{\lambda}}:= \Pi_h u -\widetilde{\lambda}_h .\label{error operator}
\end{align}
For any given $(\bm \tau,v)\in [L^2(\Omega)]^d \times H^1 (\Omega)$, we define
\begin{equation} \label{L}
L_{\bm \tau,v} (\psi) :=\sum_{T\in \mathcal{T}_h} \langle(P_W \bm \tau -\bm \tau)\cdot \bm n -\alpha _T ( P_V v -\Pi _h^P v),\psi\rangle_{\partial T^*},  \ \forall \psi \in H^1 (\Omega) \bigcup \widetilde{M}_h \bigcup V_h .
\end{equation}
%\subsection{Proof of flux approximation (\ref{flux})}
%In this subsection, we shall prove the error convergence rate of flux (\ref{flux}) by a standard energy argument.
Then we have the following error equations.
\begin{lem} \label{errorequation}
For all $( \bm \tau_h, v_h,\widetilde{\mu}_h )\in \mathbf{W}_h\times V_h\times \widetilde {M}_h(0)$ it holds
\begin{equation}\label{errequation1}
(\bm c e_h ^{\bm \sigma} ,\bm \tau_h) +(e_h ^u,\text{div}_h \bm \tau_h)-\sum_{T\in \mathcal{T}_h} \langle e_h ^{\widetilde{\lambda}},\bm \tau_h \cdot n\rangle_{\partial T^*} =\ -(\bm c \delta^{\bm \sigma},\bm \tau_h)+\sum_{T\in \mathcal{T}_h} \langle P_M u-\Pi_h^P u,\bm \tau_h\cdot \bm n\rangle_{\partial T^*},
\end{equation}
\begin{equation}\label{errequation2}
-(\text{div}_h e_h ^{\bm \sigma},v_h)+\sum_{T\in \mathcal{T}_h} \langle\alpha_T ( e_h ^u -e_h ^{\widetilde{\lambda}}),v_h\rangle_{\partial T^*}=\ L_{\bm \sigma,u} (v_h),
\end{equation}
\begin{equation}\label{errequation3}
\sum_{T\in \mathcal{T}_h} \langle e_h ^{\bm \sigma} \cdot n -\alpha_T ( e_h ^u -e_h ^{\widetilde{\lambda}}),\widetilde{\mu}_h \rangle_{\partial T^*} =\ -L_{\bm \sigma,u} (\widetilde{\mu}_h).
\end{equation}
\end{lem}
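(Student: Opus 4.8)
The plan is to prove all three identities by the same consistency-and-projection scheme. First I would verify that the exact pair $(u,\bm\sigma)$, paired with its own (single-valued) trace, satisfies the discrete equations \eqref{model1}--\eqref{model3} exactly, the penalty contributions dropping out because the interior value and the trace of $u$ coincide; then I would subtract the discrete scheme and rewrite each difference through the splittings $\bm\sigma-\bm\sigma_h=\delta^{\bm\sigma}+e_h^{\bm\sigma}$, $u-u_h=\delta^u+e_h^u$ and $u-\widetilde\lambda_h=\delta^{\widetilde\lambda}+e_h^{\widetilde\lambda}$ from \eqref{error operator}, moving the $e_h$-terms to the left-hand side and identifying the projection residues on the right. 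Throughout I would use that $\partial T^*$ is only a refinement of $\partial T$, so the two sets agree as integration domains.

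For \eqref{errequation1} I would begin from $\bm c\bm\sigma=\nabla u$ and integrate by parts elementwise, obtaining $(\bm c\bm\sigma,\bm\tau_h)+(u,\text{div}_h\bm\tau_h)-\sum_{T}\langle u,\bm\tau_h\cdot n\rangle_{\partial T^*}=0$ for all $\bm\tau_h\in\mathbf{W}_h$. Subtracting \eqref{model1} and inserting the splittings leaves two residual terms to simplify: $(\delta^u,\text{div}_h\bm\tau_h)=0$ since $\text{div}\bm\tau_h|_T\in P_{k-1}(T)\subset V(T)$ and $\delta^u\perp V(T)$; and $\langle u-\Pi_h^P u,\bm\tau_h\cdot n\rangle_{\partial T^*}=\langle P_M u-\Pi_h^P u,\bm\tau_h\cdot n\rangle_{\partial T^*}$ because $\bm\tau_h\cdot n|_F\in P_k(F)\subset M(F)$, so the face projection $P_M$ may be freely inserted. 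These two facts give exactly \eqref{errequation1}.

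For \eqref{errequation2} I would use $f=-\text{div}\bm\sigma$, so $(f,v_h)=-(v_h,\text{div}_h\bm\sigma)$, subtract \eqref{model2}, and expand $u_h-\widetilde\lambda_h=(P_V u-\Pi_h^P u)-(e_h^u-e_h^{\widetilde\lambda})$. The only nonroutine step is to integrate the volume term by parts, $(v_h,\text{div}\delta^{\bm\sigma})_T=-(\nabla v_h,\delta^{\bm\sigma})_T+\langle v_h,\delta^{\bm\sigma}\cdot n\rangle_{\partial T}$, where $(\nabla v_h,\delta^{\bm\sigma})_T=0$ because $\nabla v_h|_T\in[P_k(T)]^d=\mathbf{W}(T)$ while $\delta^{\bm\sigma}\perp\mathbf{W}(T)$; the surviving boundary term $\langle(\bm\sigma-P_{\mathbf{W}}\bm\sigma)\cdot n,v_h\rangle_{\partial T^*}$ combines with the penalty residue to assemble the functional $L_{\bm\sigma,u}$ of \eqref{L}. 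For \eqref{errequation3} the crucial ingredient is the consistency identity $\sum_{T}\langle\bm\sigma\cdot n,\widetilde\mu_h\rangle_{\partial T^*}=0$ for every $\widetilde\mu_h\in\widetilde{M}_h(0)$: since $\bm\sigma\in H(\text{div};\Omega)$ its normal trace is single-valued across each interior face and $\widetilde\mu_h$ is continuous and single-valued there, so the two elementwise contributions cancel through the opposite outward normals, while on $\partial\Omega$ the factor $\widetilde\mu_h$ vanishes. Using this identity in place of the exact equation, subtracting \eqref{model3}, decomposing the errors and recognizing $\delta^{\bm\sigma}\cdot n=(\bm\sigma-P_{\mathbf{W}}\bm\sigma)\cdot n$ together with the penalty difference then yields $-L_{\bm\sigma,u}(\widetilde\mu_h)$.

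The step I expect to be the main obstacle is the consistency identity behind \eqref{errequation3}: it is the one place where the argument genuinely exploits the structure of the continuous trace space, and it must combine normal-flux continuity of $\bm\sigma$ with the single-valued, boundary-vanishing property of $\widetilde\mu_h$. On polyhedral meshes one must also confirm that passing to the subdivided boundary $\partial T^*$ creates no spurious terms, which holds because $\partial T^*$ refines $\partial T$ without changing the integral. The rest is careful bookkeeping of signs through the subtraction and of the polynomial-degree inclusions ($\text{div}\bm\tau_h\in P_{k-1}$, $\nabla v_h\in[P_k]^d$, $\bm\tau_h\cdot n\in P_k$) that make the projection orthogonalities in \eqref{projection_error} fire.
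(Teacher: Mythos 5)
Your proposal is correct and follows essentially the same route as the paper's proof: write the consistency identities satisfied by the exact solution, subtract the scheme \eqref{model1}--\eqref{model3}, and exploit the orthogonalities of $P_V$, $P_{\mathbf{W}}$, $P_M$ (via $\text{div}\,\bm\tau_h|_T\in P_{k-1}(T)\subset V(T)$, $\nabla v_h|_T\in \mathbf{W}(T)$, $\bm\tau_h\cdot\bm n|_F\in P_k(F)\subset M(F)$) together with the cancellation $\sum_{T\in\mathcal{T}_h}\langle \bm\sigma\cdot\bm n,\widetilde\mu_h\rangle_{\partial T^*}=0$ for $\widetilde\mu_h\in\widetilde M_h(0)$, which the paper merely asserts and you correctly justify from the single-valuedness of the normal trace of $\bm\sigma\in H(\mathrm{div};\Omega)$, the continuity of $\widetilde\mu_h$, and its vanishing on $\partial\Omega$. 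The only differences are organizational (you subtract first and then insert the projections, whereas the paper projects the exact equations first and then subtracts), which is immaterial.
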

\begin{proof}
In light of  \eqref{elliptic} and the definitions of $L^2$-orthogonal projection operators, we have,  for all $(\bm \tau_h ,v_h )\in \mathbf{W}_h \times V_h$, 
\begin{align*}
	(\bm c P_{\mathbf{W}} \bm \sigma ,\bm \tau_h)+(P_V u,\text{div}_h \bm \tau_h)-\sum_{T\in \mathcal{T}_h} \langle \Pi_h^P u,\bm \tau_h \cdot n\rangle_{\partial T^*}
	&=(\bm c(P_{\mathbf{W}}  \bm \sigma -\bm \sigma) , \bm \tau_h)\\
&\quad+\sum_{T\in \mathcal{T}_h} \langle P_M u-\Pi_h^P u,\bm \tau_h\cdot \bm n\rangle_{\partial T^*},\\
%\end{align*}
%\begin{align*}
	(P_{\mathbf{W}}  \bm \sigma,\nabla v_h)_{\mathcal{T}_h}+\sum_{T\in \mathcal{T}_h} \langle P_{\mathbf{W}} \bm \sigma \cdot \bm n, v_h \rangle_{\partial T^*} &= (f,v_h)_{\mathcal{T}_h} +\sum_{T\in \mathcal{T}_h} \langle (P_{\mathbf{W}}\bm \sigma-\bm \sigma)\cdot \bm n,v_h\rangle_{\partial T}.
\end{align*}
By subtracting the above two equations from (\ref{model1}) and (\ref{model2}), respectively, we then obtain (\ref{errequation1}) and (\ref{errequation2}). Finally, equation (\ref{errequation3}) follows form (\ref{model3}) and the relation
\begin{align}
	\sum_{T\in \mathcal{T}_h} \langle \bm \sigma \cdot \bm n ,\widetilde {\mu}_h\rangle _{\partial T^*} =0,\ \forall \widetilde {\mu}_h \in \widetilde {M}_h(0).
\end{align}
\end{proof}

Introduce  a seminorm $|||\cdot|||:V_h\times \widetilde {M}_h(0) \times \mathbf{W}_h \longrightarrow \mathbb{R}$  with
\begin{equation}\label{|||-norm}
|||(v_h,\widetilde {\mu}_h,\bm \tau_h)|||^2:= (\bm c\bm \tau_h,\bm \tau_h) +\sum\limits_{T\in \mathcal{T}_h} ||\alpha_T ^\frac{1}{2} ( v_h - \widetilde{\mu}_h)||_{\partial T^*} ^2 , \  \forall (v_h,\widetilde {\mu}_h,\bm \tau_h) \in V_h\times \widetilde {M}_h(0) \times \mathbf{W}_h,
\end{equation}
%where\\
%$~~~~~~~~~~~~~~~~~~~~~~~~~~~~~~~~~~~~~~||\bm \tau_h||_{\bm c}:=(\bm c \bm \tau_h ,\bm \tau_h)^\frac{1}{2} ,\forall \bm \tau_h \in [L^2 (\Omega)]^d .$\\
then we easily get  the following lemma.

 \begin{lem}\label{errsig} It holds
 \begin{equation}\label{errsigma}
 \begin{split}
 |||(e_h ^u ,e_h ^{\widetilde{\lambda}},e_h ^{\bm \sigma})|||^2 \lesssim ||\bm \sigma-P_{\mathbf{W}} \bm \sigma||_{\mathcal{T}_h}^2+ \sum_{T\in \mathcal{T}_h}  h_T||\bm \sigma-P_{\mathbf{W}} \bm \sigma ||_{\partial T^*} ^2 + \sum_{T\in \mathcal{T}_h} h_T^{-1}|| \Pi_h^P u-P_M u ||_{\partial T^*} ^2.
 \end{split}
 \end{equation}
 \end{lem}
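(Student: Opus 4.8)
The plan is to run the standard energy argument for HDG-type schemes, testing the error equations of Lemma \ref{errorequation} against the errors themselves. Concretely, I would take $(\bm\tau_h,v_h,\widetilde\mu_h)=(e_h^{\bm\sigma},e_h^u,e_h^{\widetilde\lambda})$ in \eqref{errequation1}, \eqref{errequation2} and \eqref{errequation3}, respectively. Note first that $e_h^{\widetilde\lambda}=\Pi_h u-\widetilde\lambda_h$ vanishes on $\partial\Omega$, since both $\Pi_h u$ and $\widetilde\lambda_h$ coincide with $\Pi_h^\partial g$ there; hence $e_h^{\widetilde\lambda}\in\widetilde M_h(0)$ is an admissible test function. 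Adding the three resulting identities, the two volume couplings $(e_h^u,\mathrm{div}_h e_h^{\bm\sigma})$ and $-(\mathrm{div}_h e_h^{\bm\sigma},e_h^u)$ cancel, and the two interface couplings $-\sum_T\langle e_h^{\widetilde\lambda},e_h^{\bm\sigma}\cdot n\rangle_{\partial T^*}$ and $+\sum_T\langle e_h^{\bm\sigma}\cdot n,e_h^{\widetilde\lambda}\rangle_{\partial T^*}$ cancel as well. The surviving penalty contributions combine into $\sum_T\|\alpha_T^{1/2}(e_h^u-e_h^{\widetilde\lambda})\|_{\partial T^*}^2$, so the left-hand side is exactly $|||(e_h^u,e_h^{\widetilde\lambda},e_h^{\bm\sigma})|||^2$ by \eqref{|||-norm}.

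What remains on the right-hand side is the sum of the data terms, namely $-(\bm c\delta^{\bm\sigma},e_h^{\bm\sigma})$, then $\sum_T\langle P_M u-\Pi_h^P u,e_h^{\bm\sigma}\cdot n\rangle_{\partial T^*}$, and finally $L_{\bm\sigma,u}(e_h^u-e_h^{\widetilde\lambda})$, the last obtained from $L_{\bm\sigma,u}(e_h^u)-L_{\bm\sigma,u}(e_h^{\widetilde\lambda})$ by linearity. I would estimate these contributions separately and then absorb the factors $|||(e_h^u,e_h^{\widetilde\lambda},e_h^{\bm\sigma})|||$ by Young's inequality. For the first term, the Cauchy--Schwarz inequality in the $\bm c$-weighted inner product together with the boundedness and uniform positive definiteness of $\bm c$ gives a bound by $\|\delta^{\bm\sigma}\|_{\mathcal T_h}\,|||(\cdots)|||$. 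For the second term I would use the elementwise Cauchy--Schwarz inequality, the discrete trace inequality $\|e_h^{\bm\sigma}\cdot n\|_{\partial T^*}=\|e_h^{\bm\sigma}\cdot n\|_{\partial T}\lesssim h_T^{-1/2}\|e_h^{\bm\sigma}\|_T$ (valid for the piecewise polynomial $e_h^{\bm\sigma}$ on shape-regular polygons under \textbf{M1}--\textbf{M2}), and $\|e_h^{\bm\sigma}\|_{\mathcal T_h}\lesssim(\bm c e_h^{\bm\sigma},e_h^{\bm\sigma})^{1/2}$, to reach a bound by $\big(\sum_T h_T^{-1}\|\Pi_h^P u-P_M u\|_{\partial T^*}^2\big)^{1/2}|||(\cdots)|||$.

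For $L_{\bm\sigma,u}(e_h^u-e_h^{\widetilde\lambda})$ I would split the integrand into its flux part $(P_{\mathbf W}\bm\sigma-\bm\sigma)\cdot n=-\delta^{\bm\sigma}\cdot n$ and its penalty part $-\alpha_T(P_V u-\Pi_h^P u)$. Writing $\|e_h^u-e_h^{\widetilde\lambda}\|_{\partial T^*}=h_T^{1/2}\|\alpha_T^{1/2}(e_h^u-e_h^{\widetilde\lambda})\|_{\partial T^*}$ (since $\alpha_T=h_T^{-1}$), the flux part is controlled by $\big(\sum_T h_T\|\delta^{\bm\sigma}\|_{\partial T^*}^2\big)^{1/2}|||(\cdots)|||$, producing the second term of the asserted bound. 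The penalty part yields $\big(\sum_T h_T^{-1}\|P_V u-\Pi_h^P u\|_{\partial T^*}^2\big)^{1/2}|||(\cdots)|||$; here I would reconcile the factor $P_V u-\Pi_h^P u$ with the stated $\Pi_h^P u-P_M u$ by the triangle inequality together with the observation that, since the trace of $P_V u$ on each flat face lies in $P_{k+1}$, one has $P_M(P_V u)=P_V u$ on $\partial T^*$ and hence $\|P_V u-P_M u\|_{\partial T^*}=\|P_M\delta^u\|_{\partial T^*}\le\|\delta^u\|_{\partial T^*}$, which is of the same (optimal) order and is absorbed. Combining the four estimates, dividing through by $|||(\cdots)|||$ and squaring gives the claim.

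The main obstacle I anticipate is twofold. First, one must be careful that the consistency steps behind Lemma \ref{errorequation} exploit the correct inclusions $\mathrm{div}\,\bm\tau_h\in V(T)$, $\nabla v_h\in\mathbf W(T)$ and $\bm\tau_h\cdot n|_F\in M(F)$, which hold precisely because of the degree choice \eqref{local spaces}; these are what let the exact-solution projections pass through cleanly. Second, and more delicate on polygonal/polyhedral meshes, is ensuring that the discrete trace inequality and the identification $\|\cdot\|_{\partial T^*}=\|\cdot\|_{\partial T}$ hold uniformly --- this is where the shape-regularity assumptions \textbf{M1}--\textbf{M2} and the auxiliary simplicial mesh $\mathcal T_h^*$ enter --- and that the factor $P_V u-\Pi_h^P u$ appearing in $L_{\bm\sigma,u}$ can indeed be subsumed into the three quantities on the right-hand side rather than generating an uncontrolled term.
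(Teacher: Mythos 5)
Your proposal is correct and follows essentially the same route as the paper: test the error equations of Lemma \ref{errorequation} with $(\bm\tau_h,v_h,\widetilde\mu_h)=(e_h^{\bm\sigma},e_h^u,e_h^{\widetilde\lambda})$, use the cancellations to obtain exactly the decomposition $|||(e_h^u,e_h^{\widetilde\lambda},e_h^{\bm\sigma})|||^2=I_1+I_2+I_3$ with the same three terms ($-(\bm c\delta^{\bm\sigma},e_h^{\bm\sigma})$, the $\langle P_M u-\Pi_h^P u,e_h^{\bm\sigma}\cdot\bm n\rangle$ term, and $L_{\bm\sigma,u}(e_h^u-e_h^{\widetilde\lambda})$), then estimate each by Cauchy--Schwarz and trace inequalities. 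If anything, you are more explicit than the paper on one point: the paper's bound for $I_3$ contains $h_T^{-1}\|P_V u-\Pi_h^P u\|^2_{\partial T^*}$ and is silently identified with the stated $h_T^{-1}\|\Pi_h^P u-P_M u\|^2_{\partial T^*}$, whereas you reconcile the two via $\|P_V u-P_M u\|_{\partial T^*}=\|P_M\delta^u\|_{\partial T^*}\le\|\delta^u\|_{\partial T^*}$, accepting an extra term of the same optimal order, which is harmless for the use of the lemma in Theorem \ref{mainth}.
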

\begin{proof}
We first show 
\begin{equation}\label{errordec}
|||(e_h ^u ,e_h ^{\widetilde{\lambda}},e_h ^{\bm \sigma})|||^2=I_1 +I_2+I_3,
\end{equation}
where $ 
I_1 :=\  -(\bm c \delta^{\bm \sigma} ,e_h ^{\bm \sigma}),$ 
$I_2:=\  \sum_{T\in \mathcal{T}_h} \langle P_M u-\Pi_h^P u,\bm e_h^{\bm \sigma}\cdot \bm n\rangle_{\partial T^*},$
and $I_3:=\  L_{\sigma,u} (e_h^u -e_h^{\widetilde{\lambda}})$.

In fact, taking $\bm \tau_h=e_h ^{\bm \sigma}$ in (\ref{errequation1}), $v_h=e_h ^u$ in (\ref{errequation2}), $\widetilde{\mu}_h=e_h^{\widetilde{\lambda}}$ in (\ref{errequation3}), and adding the resultant three equations together, we obtain
\begin{align*}
	(& \bm c e_h ^{\bm \sigma} ,e_h^{\bm \sigma}) +(e_h ^u,\text{div}_h e_h^{\bm \sigma})-\sum_{T\in \mathcal{T}_h} \langle e_h ^{\widetilde{\lambda}},e_h^{\bm \sigma} \cdot n\rangle_{\partial T^*}-(\text{div}_h e_h ^{\bm \sigma},e_h^u)\\
	&+\sum_{T\in \mathcal{T}_h} \langle\alpha_T ( e_h ^u - e_h ^{\widetilde{\lambda}}),e_h^u\rangle_{\partial T^*}+\sum_{T\in \mathcal{T}_h} \langle e_h ^{\bm \sigma} \cdot n -\alpha_T ( e_h ^u -e_h ^{\widetilde{\lambda}}),e_h^{\widetilde{\lambda}} \rangle_{\partial T^*}\\
	&=|||(e_h ^u ,e_h ^{\widetilde{\lambda}},e_h ^{\bm \sigma})|||^2,
\end{align*}
which, together with Lemma \ref{errorequation}, yields  (\ref{errordec}).

In view of Cauchy-Schwarz inequality and the trace inequality,  it is easy to   get
\begin{align*}
I_1 &\lesssim ||\delta^{\bm \sigma}||_{\mathcal{T}_h} |||(e_h ^u ,e_h ^{\widetilde{\lambda}},e_h ^{\bm \sigma})|||, \\
I_2 &\lesssim (\sum_{T\in \mathcal{T}_h} h_T^{-1}|| (P_M u-\Pi_h^P u) ||_{\partial T^*} ^2)^{\frac{1}{2}} \ |||(e_h ^u ,e_h ^{\widetilde{\lambda}},e_h ^{\bm \sigma})|||,\\
I_3 &\lesssim (\sum_{T\in \mathcal{T}_h}h_T||\delta^{\bm \sigma}||_{\partial T^*}^2 +h_T^{-1}||P_V u-\Pi_h^P u||_{\partial T^*}^2)^\frac{1}{2}  |||(e_h ^u ,e_h ^{\widetilde{\lambda}},e_h ^{\bm \sigma})|||.
\end{align*}
Finally,  the desired estimate \eqref{errsigma} follows from    \eqref{errordec} and the above three inequalities.
\end{proof}

Based on the above lemmas, we easily derive the following  error estimate for the flux approximation.
 \begin{thm}\label{mainth}
	%For $k\geq 0$, assume $(u, \bm \sigma)\in H^{k+2} (\mathcal{T}_h)\times [H^{k+1} (\mathcal{T}_h)]^d$, the local spaces of HDG method are $V(T)=P_{k+1} (T)$, $\mathbf{W} (T)=[P_k (T)]^d$ and $M(F)=P_{k+1} (F)$, and $\alpha_T= \frac{1}{h_T} $. And $\forall T \in \mathcal{T}_h$ is arbitrary polygon for $d=2$ or simplex for $d=3$, $(\bm \sigma_h,u_h)$ are obtained from corresponding EDG method , then
	Let $(u, \bm \sigma)\in H^{k+2} (\mathcal{T}_h)\times [H^{k+1} (\mathcal{T}_h)]^d$ be the weak solution to the model \eqref{elliptic}  with $k\geq0$, and let $(u_h,\widetilde {\lambda}_h,\bm \sigma_h)\in V_h\times \widetilde {M}_h (g) \times \mathbf{W}_h$ be the solution to the EDG scheme (\ref{model1})-(\ref{model3}).  Then we have
	\begin{eqnarray}
	||\bm  \sigma -\bm  \sigma_h||&\lesssim& h ^{k+1} (||\bm \sigma||_{k+1 ,\mathcal{T}_h} + ||u||_{k+2 ,\mathcal{T}_h}). \label{flux}
%	||u-u_h||&\lesssim& h^{k+2} (||\bm \sigma||_{k+1,\mathcal{T}_h}+||u||_{k+2,\mathcal{T}_h}).\label{scalar}
	\end{eqnarray}
\end{thm}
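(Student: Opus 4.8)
The plan is to reduce everything to the single estimate already packaged in Lemma~\ref{errsig}, so that the theorem follows by bounding the three projection-error terms on its right-hand side. First I would split the flux error by the triangle inequality as $\|\bm\sigma-\bm\sigma_h\|\le\|\delta^{\bm\sigma}\|+\|e_h^{\bm\sigma}\|$, using the notation of \eqref{error operator}. The first piece is a pure $L^2$-projection error: the second line of Lemma~\ref{lemma_L_2}, summed over $T\in\mathcal{T}_h$, gives $\|\delta^{\bm\sigma}\|\lesssim h^{k+1}\|\bm\sigma\|_{k+1,\mathcal{T}_h}$ directly. For the second piece I would exploit that $\bm c$ is uniformly positive definite, so that $\|e_h^{\bm\sigma}\|^2\lesssim(\bm c e_h^{\bm\sigma},e_h^{\bm\sigma})\le|||(e_h^u,e_h^{\widetilde\lambda},e_h^{\bm\sigma})|||^2$ by the definition \eqref{|||-norm}. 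This isolates the whole difficulty in estimating the triple seminorm through Lemma~\ref{errsig}.

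Next I would bound the three terms on the right of \eqref{errsigma} one by one. The first two involve only $\delta^{\bm\sigma}$: the volume term $\|\bm\sigma-P_{\mathbf{W}}\bm\sigma\|_{\mathcal{T}_h}^2$ and the scaled boundary term $\sum_T h_T\|\bm\sigma-P_{\mathbf{W}}\bm\sigma\|_{\partial T^*}^2$ are each $\lesssim h^{2(k+1)}\|\bm\sigma\|_{k+1,\mathcal{T}_h}^2$, again by the two inequalities in the second line of Lemma~\ref{lemma_L_2} (the boundary factor $h_T^{1/2}$ exactly absorbs the extra $h_T$). The third term $\sum_T h_T^{-1}\|\Pi_h^P u-P_M u\|_{\partial T^*}^2$ is the crux. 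I would insert $u$ and write $\Pi_h^P u-P_M u=(\Pi_h^P u-u)+(u-P_M u)$. The second summand is controlled by the third line of Lemma~\ref{lemma_L_2}, giving $\|u-P_M u\|_{\partial T^*}\lesssim h_T^{k+3/2}|u|_{k+2,T}$. For the first summand I would use that, by construction, $\Pi_h^P u$ and $\Pi_{h^*}^P u$ carry the same nodal values on the skeleton $\mathcal{F}_h^*$ and hence coincide on each $\partial T^*$, so Lemma~\ref{lemma_pihp} applies verbatim and yields $\|\Pi_h^P u-u\|_{\partial T^*}\lesssim h_T^{k+3/2}(\sum_{T'\in\omega_T}|u|_{k+2,T'}^2)^{1/2}$. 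Multiplying by $h_T^{-1}$ turns the exponent $2k+3$ into $2k+2$, and summing over $T$ with the finite-overlap property of the patches $\omega_T$ (a consequence of shape regularity \textbf{M1}--\textbf{M2}) gives $\lesssim h^{2(k+1)}\|u\|_{k+2,\mathcal{T}_h}^2$.

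Putting the three bounds into \eqref{errsigma} produces $|||(e_h^u,e_h^{\widetilde\lambda},e_h^{\bm\sigma})|||\lesssim h^{k+1}(\|\bm\sigma\|_{k+1,\mathcal{T}_h}+\|u\|_{k+2,\mathcal{T}_h})$, and combining with the two reductions of the first paragraph delivers \eqref{flux}. I expect the only genuinely delicate step to be the third term: one must recognize that the nonstandard mean-interpolant $\Pi_h^P$ inherits the approximation rate of $\Pi_{h^*}^P$ (because they agree on $\mathcal{F}_h^*$), and one must track that the neighborhood-based bound of Lemma~\ref{lemma_pihp} sums cleanly only because shape regularity bounds the number of elements meeting a given $T$. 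The remaining algebra---matching powers of $h_T$ and applying Cauchy--Schwarz---is routine.
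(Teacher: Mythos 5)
Your proposal is correct and takes essentially the same approach as the paper, whose proof of this theorem is exactly the chain you describe: the triangle inequality $\|\bm\sigma-\bm\sigma_h\|\le\|\bm\sigma-P_{\mathbf{W}}\bm\sigma\|+\|P_{\mathbf{W}}\bm\sigma-\bm\sigma_h\|$, the seminorm definition \eqref{|||-norm}, and Lemmas \ref{lemma_L_2}, \ref{lemma_pihp} and \ref{errsig}. Your write-up merely makes explicit the details the paper leaves implicit, in particular that $\Pi_h^P u$ and $\Pi_{h^*}^P u$ coincide on $\mathcal{F}_h^*$ so that Lemma \ref{lemma_pihp} transfers to $\Pi_h^P$, and that shape regularity gives the finite overlap of the patches $\omega_T$ needed to sum the local bounds.
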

\begin{proof}%[Proof of \eqref{flux}]
The desired estimate \eqref{flux} follows from 
	the triangle inequality
	\begin{align*}
		||\bm \sigma - \bm \sigma_h|| \leqslant ||\bm \sigma - P_{\mathbf{W}} \bm \sigma||+||P_{\mathbf{W}} \bm \sigma - \bm \sigma_h||,
	\end{align*}
the defintion \eqref{|||-norm} of the seminorm $|||\cdot|||$, and  Lemmas \ref{lemma_L_2}, \ref{lemma_pihp} and \ref{errsig}. 
\end{proof}

%\begin{rem}
%  For the equal order EDG methods proposed in \cite{BJSCHK}, it has been pointed out that the lack of optimal convergence rate is due to the fact that the numerical trace $\widehat{\bm \sigma}_h\cdot \bm n$ is not single-valued.
%  From the above analyses, we found that the last term in the right hand side of \eqref{errsigma} affect the convergence rate for different EDG methods.
%\end{rem}
 \subsection{Estimation for potential approximation}
 %In the previous section, we establish an error estimate for $\bm \sigma_h$ that only requires that the solution be as regular as required for the application of the projection. On domains permitting higher regularity estimates, we shall use Aubin-Nitsche's technique of duality argument to derive the error estimation for the potential approximation $u_h$. To this end we introduce the dual problem
 Based on the error estimation for the flux approximation $\bm \sigma_h$ in the previous subsection,   we  shall use the Aubin-Nitsche's technique of duality argument to derive the   estimation for the potential approximation $u_h$.  First, we introduce the following auxilliary problem:
 \begin{equation}\label{dual}
 \left\{\begin{array}{l}
  \bm c \bm \Phi -\nabla \Psi =0 ~~~~~\mbox{in}~~ \Omega ,\\
  \nabla\cdot \bm \Phi =  e_h ^u ~~~~~~~~\mbox{in}~~ \Omega ,\\
  \Psi = 0, ~~~~~~~~~~~~~on~~ \partial \Omega ,
  \end{array}\right.
\end{equation}
where, as defined in (\ref{error operator}), $e_h ^u =P_V u-u_h$. In addition, we assume the following regularity property holds:
\begin{equation} \label{regularity}
||\bm \Phi||_{1,\Omega} +||\Psi||_{2,\Omega} \lesssim ||e_h ^u||_{0,\Omega} .
\end{equation}
%Similar to notation \eqref{error operator}, we define the following error expressions
We have the following equality.
 \begin{lem}\label{erru decompose} It holds
 \begin{eqnarray}
 ||e_h ^u|| ^2 &=&(\bm c e_h^{\bm \sigma},\delta^{\bm \Phi})+(\bm c \delta^{\bm \sigma} ,P_{\mathbf{W}}\bm \Phi)+\langle e^u_h -e_h^{\widetilde{\lambda}},\delta^{\bm \Phi}\cdot\bm n\rangle_{\partial\mathcal T_{h^*}} 
	+\sum_{T\in \mathcal{T}_h}\langle\alpha_T(e^u_h-e^{\widetilde{\lambda}}_h),P_V\Psi-\Pi_h^P\Psi\rangle_{\partial T^*}\nonumber\\
	&&\quad - \langle  e_h^{\bm \sigma}\cdot\bm n,\delta^{\widetilde{\Psi}}\rangle_{\partial \mathcal{T}_{h^*}} + \langle P_M u-\Pi_h^P u,\delta^{\bm \Phi}\cdot \bm n\rangle_{\partial \mathcal{T}_{h^*}}-L_{\bm \sigma,u}(\Pi_h^P \Psi-P_V \Psi).
 \end{eqnarray}
 where
  $
	\delta^{\bm \Phi}:=\bm \Phi-P_{\mathbf{W}}\bm \Phi,$ $ \delta^{\Psi}:=\Psi-P_V \Psi,$ $ \delta^{\widetilde{\Psi}}:=\Psi-\Pi_h^P \Psi,
$
and $e_h^{\bm \sigma}, \delta^{\bm \sigma},e_h^{\widetilde{\lambda}}$ are defined in (\ref{error operator}).
 \end{lem}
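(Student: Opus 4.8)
The plan is to run an Aubin--Nitsche duality argument driven by the auxiliary problem \eqref{dual}. Since $\nabla\cdot\bm\Phi=e_h^u$ and $e_h^u\in V_h$, I would start from the identity $\|e_h^u\|^2=(e_h^u,\nabla\cdot\bm\Phi)$ and then systematically feed the projected dual data $P_{\mathbf W}\bm\Phi$, $P_V\Psi$ and $\Pi_h^P\Psi$ into the three error equations \eqref{errequation1}--\eqref{errequation3}, while using the first dual relation $\bm c\bm\Phi=\nabla\Psi$ to convert the volume terms. The guiding principle is that whenever the exact dual solution meets a discrete error I replace it by its projection, at the cost of a face term carrying one of $\delta^{\bm\Phi}$ or $\delta^{\widetilde\Psi}$, and every volume term is eliminated through one of the error equations.

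Concretely, I would first integrate $(e_h^u,\nabla\cdot\bm\Phi)$ by parts element by element. Because $\nabla e_h^u\in[P_k(T)]^d=\mathbf W(T)$, the $L^2$-orthogonality of $P_{\mathbf W}$ lets me replace $\bm\Phi$ by $P_{\mathbf W}\bm\Phi$ in the resulting volume integral, producing $(e_h^u,\mathrm{div}_h P_{\mathbf W}\bm\Phi)$ plus the face term $\sum_{T\in\mathcal T_h}\langle e_h^u,\delta^{\bm\Phi}\cdot\bm n\rangle_{\partial T^*}$. I then eliminate $(e_h^u,\mathrm{div}_h P_{\mathbf W}\bm\Phi)$ via \eqref{errequation1} with $\bm\tau_h=P_{\mathbf W}\bm\Phi$; this is what generates the consistency terms in $(\bm c\delta^{\bm\sigma},P_{\mathbf W}\bm\Phi)$ and $\langle P_M u-\Pi_h^P u,P_{\mathbf W}\bm\Phi\cdot\bm n\rangle_{\partial T^*}$, together with a $(\bm c e_h^{\bm\sigma},P_{\mathbf W}\bm\Phi)$ term and a $\langle e_h^{\widetilde\lambda},P_{\mathbf W}\bm\Phi\cdot\bm n\rangle$ term. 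Splitting $(\bm c e_h^{\bm\sigma},P_{\mathbf W}\bm\Phi)=(\bm c e_h^{\bm\sigma},\bm\Phi)-(\bm c e_h^{\bm\sigma},\delta^{\bm\Phi})$ and rewriting $(\bm c e_h^{\bm\sigma},\bm\Phi)=(e_h^{\bm\sigma},\nabla\Psi)$ through symmetry of $\bm c$ and the dual relation, one integration by parts (using $\mathrm{div}\,e_h^{\bm\sigma}\in V(T)$ to insert $P_V\Psi$) yields a volume term $(\mathrm{div}_h e_h^{\bm\sigma},P_V\Psi)$ and a face term in $\Psi$. Finally I would apply \eqref{errequation2} with $v_h=P_V\Psi$ and \eqref{errequation3} with the trace of $\Pi_h^P\Psi$---legitimate because $\Psi=0$ on $\partial\Omega$ forces $\Pi_h^P\Psi\in\widetilde M_h(0)$---to turn these into the penalty term $\sum_{T\in\mathcal T_h}\langle\alpha_T(e_h^u-e_h^{\widetilde\lambda}),P_V\Psi-\Pi_h^P\Psi\rangle_{\partial T^*}$ and the term $L_{\bm\sigma,u}(\Pi_h^P\Psi-P_V\Psi)$.

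The remaining work, and the step I expect to be the true obstacle, is the bookkeeping of the skeleton integrals on the polygonal mesh. I would split $\Psi=\Pi_h^P\Psi+\delta^{\widetilde\Psi}$ in the surviving $\langle e_h^{\bm\sigma}\cdot\bm n,\Psi\rangle$ sum, so that $\delta^{\widetilde\Psi}$ produces $-\langle e_h^{\bm\sigma}\cdot\bm n,\delta^{\widetilde\Psi}\rangle_{\partial\mathcal T_{h^*}}$ and the $\Pi_h^P\Psi$ part is absorbed by \eqref{errequation3}. To convert $P_{\mathbf W}\bm\Phi$ into $\delta^{\bm\Phi}$ in the two face terms carrying $e_h^{\widetilde\lambda}$ and $P_M u-\Pi_h^P u$, I rely on the $H(\mathrm{div})$-continuity of the \emph{exact} dual flux $\bm\Phi$ together with the single-valuedness of $e_h^{\widetilde\lambda}\in\widetilde M_h(0)$ and of $P_M u-\Pi_h^P u$ across interior faces: this makes $\sum_{T\in\mathcal T_h}\langle e_h^{\widetilde\lambda},\bm\Phi\cdot\bm n\rangle_{\partial T^*}$ collapse (it vanishes, since $e_h^{\widetilde\lambda}=0$ on $\partial\Omega$) and lets me merge $\langle e_h^u,\delta^{\bm\Phi}\cdot\bm n\rangle$ with $-\langle e_h^{\widetilde\lambda},\delta^{\bm\Phi}\cdot\bm n\rangle$ into $\langle e_h^u-e_h^{\widetilde\lambda},\delta^{\bm\Phi}\cdot\bm n\rangle_{\partial\mathcal T_{h^*}}$. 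The delicate points are the exact matching of the element boundaries $\partial T$ with the subdivided skeleton $\partial T^*$ and the careful treatment of the contributions on $\partial\Omega$ (where $P_M u$ and $\Pi_h^P u$ need not coincide); getting every one of these cancellations right on general polygons, rather than on simplices, is what makes the accounting nontrivial and is precisely where a sign is easiest to misplace.
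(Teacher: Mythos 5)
Your proposal is correct and is essentially the paper's own argument run in the opposite direction: the paper substitutes $(\bm\tau_h,v_h,\widetilde{\mu}_h)=(-P_{\mathbf{W}}\bm\Phi,\,P_V\Psi,\,\Pi_h^P\Psi)$ into \eqref{errequation1}--\eqref{errequation3} and then performs exactly your two element-wise integrations by parts (exchanging $P_{\mathbf{W}}\bm\Phi$ for $\bm\Phi$ against $\nabla e_h^u$, and $P_V\Psi$ for $\Psi$ against $\mathrm{div}_h\, e_h^{\bm\sigma}$) so that $\nabla\cdot\bm\Phi=e_h^u$ produces the $\|e_h^u\|^2$ term, closing with the same two cancellations you invoke, namely $\bm c\bm\Phi=\nabla\Psi$ and $\langle e_h^{\widetilde{\lambda}},\bm\Phi\cdot\bm n\rangle_{\partial\mathcal{T}_{h^*}}=0$ by single-valuedness of $e_h^{\widetilde{\lambda}}$ and $H(\mathrm{div})$-continuity of $\bm\Phi$. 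The skeleton and sign bookkeeping you flag as the delicate part (in particular the conversion of $P_{\mathbf{W}}\bm\Phi$ to $\delta^{\bm\Phi}$ in the $P_Mu-\Pi_h^Pu$ term, whose contribution on $\partial\Omega$ the paper passes over silently) is treated no more explicitly in the paper's proof than in your sketch, so the two arguments coincide in substance.
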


 \begin{proof}
 By taking $\bm \tau_h=-P_{\mathbf{W}}\bm \Phi, v_h=P_V\Psi$, and $\widetilde{\mu}_h=\Pi_h^P \Psi$ in the error equations (\ref{errequation1})-(\ref{errequation3}), we can get
\begin{equation} \label{dual_error_1}
-(\bm c e_h ^{\bm \sigma} ,P_{\mathbf{W}}\bm \Phi) -(e_h ^u,\nabla\cdot P_{\mathbf{W}}\bm \Phi)+\sum_{T\in \mathcal{T}_h} \langle e_h ^{\widetilde{\lambda}},P_{\mathbf{W}}\bm \Phi \cdot n\rangle_{\partial T^*} = (\bm c \delta^{\bm \sigma},P_{\mathbf{W}}\bm \Phi)- \langle P_M u-\Pi_h^P u,P_{\mathbf{W}}\bm \Phi\cdot \bm n\rangle_{\partial \mathcal{T}_{h^*}},
\end{equation}
\begin{equation} \label{dual_error_2}
-(\nabla\cdot e_h ^{\bm \sigma},P_V\Psi)+\sum_{T\in \mathcal{T}_h} \langle\alpha_T (e_h ^u - e_h ^{\widetilde{\lambda}}),P_V\Psi\rangle_{\partial T^*}=\ L_{\bm \sigma,u} (P_V\Psi),
\end{equation}
\begin{equation} \label{dual_error_3}
\sum_{T\in \mathcal{T}_h} \langle e_h ^{\bm \sigma} \cdot n -\alpha_T (e_h ^u -e_h ^{\widetilde{\lambda}}),\Pi_h^P \Psi \rangle_{\partial T^*} =\ -L_{\bm \sigma,u} (\Pi_h^P \Psi).
\end{equation}
Integration by parts gives
\begin{align*}
-(e_h^u,\nabla\cdot P_{\mathbf{W}}\bm \Phi)=&-\langle e^u_h,P_{\mathbf{W}}\bm \Phi\cdot\bm n\rangle_{\partial\mathcal T_{h^*}}+(\nabla e^u_h,P_{\mathbf{W}}\bm \Phi)_{\mathcal T_h}\\
=&-\langle e^u_h,P_{\mathbf{W}}\bm \Phi\cdot\bm n\rangle_{\partial\mathcal T_{h^*}}+(\nabla e^u_h,\bm \Phi)_{\mathcal T_h}\\
=&\langle e^u_h,\delta^{\bm \Phi}\cdot\bm n\rangle_{\partial\mathcal T_{h^*}}-(e^u_h,\nabla\cdot\bm \Phi)_{\mathcal T_h}\\
=&\langle e^u_h,\delta^{\bm \Phi}\cdot\bm n\rangle_{\partial \mathcal T_{h^*}}-\|e^u_h\|^2_{\mathcal T_{h}}.
\end{align*}
Similarly, we can get
\begin{align*}
-(\nabla\cdot e_h ^{\bm \sigma},P_V\Psi)_{\mathcal{T}_h}&=(e_h ^{\bm \sigma},\nabla\Psi)_{\mathcal{T}_h}-\langle e_h^{\bm \sigma}\cdot\bm n, \Psi\rangle_{\partial \mathcal{T}_{h^*}}.
\end{align*}

Inserting the two equations above into \eqref{dual_error_1}-\eqref{dual_error_2}, we have
\begin{align}
-(\bm ce_h^{\bm \sigma},\bm\Phi)_{\mathcal T_h}&+\langle e^u_h,\delta^{\bm \Phi}\cdot\bm n\rangle_{\partial\mathcal T_{h^*}}-\|e^u_h\|^2_{\mathcal T_h}+\langle e^{\widetilde{\lambda}}_h,P_{\mathbf{W}}\bm \Phi\cdot\bm n\rangle_{\partial \mathcal{T}_{h^*}}\nonumber\\
&=(\bm c \delta^{\bm \sigma} ,P_{\mathbf{W}}\bm \Phi)_{\mathcal{T}_h}-(\bm c e_h^{\bm \sigma},\delta^{\bm \Phi})_{\mathcal T_h}+ \langle P_M u-\Pi_h^P u,P_{\mathbf{W}}\bm \Phi\cdot \bm n\rangle_{\partial \mathcal{T}_{h^*}},\label{dual_error1}\\
(e_h ^{\bm \sigma},\nabla\Psi)_{\mathcal{T}_h}&-\langle e_h^{\bm \sigma}\cdot\bm n, \Psi\rangle_{\partial \mathcal{T}_{h^*}}+\sum_{T\in \mathcal{T}_h} \langle\alpha_T (e_h ^u -e_h ^{\widetilde{\lambda}}),P_V\Psi\rangle_{\partial T^*}=\ L_{\bm \sigma,u} (P_V\Psi). \label{dual_error2}
\end{align}
Adding equations \eqref{dual_error_3}, \eqref{dual_error1}, and \eqref{dual_error2} together, and using the facts that $\bm \Phi-\nabla\Psi=0$ and $\langle e^{\widetilde{\lambda}}_h,\bm \Phi\cdot\bm n\rangle_{\partial\mathcal T_{h^*}}=0$, we obtain
\begin{align*}
&(e_h^{\bm \sigma},-\bm c \bm \Phi+\nabla\Psi)_{\mathcal T_h}+\langle e^u_h -e_h^{\widetilde{\lambda}},\delta^{\bm \Phi}\cdot\bm n\rangle_{\partial\mathcal T_{h^*}}-\|e^u_h\|^2_{\mathcal T_h}\\
&\quad+\sum_{T\in \partial\mathcal{T}_h}\langle\alpha_T(e^u_h-e^{\widetilde{\lambda}}_h),P_V\Psi-\Pi_h^P\Psi\rangle_{\partial T^*}- \langle  e_h^{\bm \sigma}\cdot\bm n, \delta^{\widetilde{\Psi}}\rangle_{\partial \mathcal{T}_{h^*}}\\
&=-(\bm c \delta^{\bm \sigma},P_{\mathbf{W}}\bm \Phi)_{\mathcal{T}_h}
 -(\bm c e_h^{\bm \sigma},\delta^{\bm \Phi})_{\mathcal T_h} +L_{\bm \sigma,u}(P_V \Psi-\Pi_h^P \Psi)\\
 &\quad- \langle P_M u-\Pi_h^P u,\delta^{\bm \Phi}\cdot \bm n\rangle_{\partial \mathcal{T}_{h^*}},
\end{align*}
which yields the desired conclusion.
%\begin{align*}
%	\|e^u_h\|^2_{\mathcal T_h}&=(ce_h^{\bm \sigma},\delta^{\bm \Phi})_{\mathcal T_h}+(\delta^{\bm \sigma} ,P_{\mathbf{W}}\bm \Phi)_c+\langle e^u_h -e_h^{\widetilde{\lambda}},\delta^{\bm \Phi}\cdot\bm n\rangle_{\partial\mathcal T_h}\\
%	&+\langle\tau(e^u_h-e^{\widetilde{\lambda}}_h),P_V\Psi-\Pi_h\Psi\rangle_{\partial\mathcal T_h}- \langle  e_h^{\bm \sigma}\cdot\bm n,\delta^{\widetilde{\Psi}}\rangle_{\partial \mathcal{T}_h}\\
%	&-L_{\bm \sigma,u}(\Pi_h \Psi-P_V \Psi)+\sum_{T\in \mathcal{T}_h} \langle P_M u-\Pi_h u,\delta^{\bm \Phi}\cdot \bm n\rangle_{\partial \mathcal{T}_h}.
%\end{align*}
\end{proof}

In light of Lemma \ref{erru decompose}, we further have the following estimate.  %Next, we estimate the terms $\Pi_j.\ j=1,2,...,7$ in the following lemma to get the estimate of $e_h^u$.
 \begin{lem} \label{esti4I}
Under the regularity assumption (\ref{regularity}), it holds
 \begin{equation}
 	\begin{split}
 	||e_h^u||_{\mathcal{T}_h} \lesssim h|||(e_h ^u ,e_h ^{\widetilde{\lambda}},e_h ^{\bm \sigma})||| +h||\delta^{\bm \sigma}||_{\mathcal{T}_h} + h^{\frac{3}{2}} ||\delta^{\bm \sigma}||_{\partial \mathcal{T}_{h^*}}+h^\frac{1}{2}||P_V u-\Pi_h^P u||_{\partial \mathcal{T}_{h^*}}.
 	\end{split}
 \end{equation}
 \end{lem}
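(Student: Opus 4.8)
The plan is to bound, term by term, the right-hand side of the identity in Lemma~\ref{erru decompose}, estimating each of its summands by $\|e_h^u\|$ times one of the four quantities in the claimed inequality, and then to divide by $\|e_h^u\|$. The device that turns the dual pair $(\bm\Phi,\Psi)$ into $\|e_h^u\|$ is the regularity bound \eqref{regularity}, $\|\bm\Phi\|_{1,\Omega}+\|\Psi\|_{2,\Omega}\lesssim\|e_h^u\|$. Since $(\bm\Phi,\Psi)$ has only $H^1\times H^2$ regularity, I will invoke Lemma~\ref{lemma_L_2} and Lemma~\ref{lemma_pihp} at lowest order, so that $\|\delta^{\bm\Phi}\|_T\lesssim h_T|\bm\Phi|_{1,T}$, $\|\delta^{\bm\Phi}\|_{\partial T^*}\lesssim h_T^{1/2}|\bm\Phi|_{1,T}$, and both $\|\delta^{\widetilde{\Psi}}\|_{\partial T^*}$ and $\|P_V\Psi-\Pi_h^P\Psi\|_{\partial T^*}$ are $\lesssim h_T^{3/2}(\sum_{T'\in\omega_T}|\Psi|_{2,T'}^2)^{1/2}$.

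The four terms $(\bm c e_h^{\bm\sigma},\delta^{\bm\Phi})$, $\langle e_h^u-e_h^{\widetilde{\lambda}},\delta^{\bm\Phi}\cdot\bm n\rangle_{\partial\mathcal{T}_{h^*}}$, $\sum_T\langle\alpha_T(e_h^u-e_h^{\widetilde{\lambda}}),P_V\Psi-\Pi_h^P\Psi\rangle_{\partial T^*}$ and $\langle e_h^{\bm\sigma}\cdot\bm n,\delta^{\widetilde{\Psi}}\rangle_{\partial\mathcal{T}_{h^*}}$ are handled by Cauchy--Schwarz alone. I control $\|e_h^{\bm\sigma}\|$ and $(\sum_T\alpha_T\|e_h^u-e_h^{\widetilde{\lambda}}\|_{\partial T^*}^2)^{1/2}$ by $|||(e_h^u,e_h^{\widetilde{\lambda}},e_h^{\bm\sigma})|||$ via \eqref{|||-norm}, pair the interface factors with the weights $\alpha_T^{\pm1/2}=h_T^{\mp1/2}$, and treat the $e_h^{\bm\sigma}$-factor of the last term by the inverse trace inequality $\|e_h^{\bm\sigma}\|_{\partial T^*}\lesssim h_T^{-1/2}\|e_h^{\bm\sigma}\|_T$ (legitimate since $e_h^{\bm\sigma}$ is piecewise polynomial). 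Each of these reduces to $|||(e_h^u,e_h^{\widetilde{\lambda}},e_h^{\bm\sigma})|||$ times a factor $\lesssim h(\|\bm\Phi\|_1+\|\Psi\|_2)\lesssim h\|e_h^u\|$, which produces the first target term. For $-L_{\bm\sigma,u}(\Pi_h^P\Psi-P_V\Psi)$ I expand the definition \eqref{L}, obtaining a $\delta^{\bm\sigma}\cdot\bm n$ part and an $\alpha_T(P_V u-\Pi_h^P u)$ part; weighted Cauchy--Schwarz against $\|P_V\Psi-\Pi_h^P\Psi\|_{\partial T^*}\lesssim h_T^{3/2}(\ldots)$ turns these into $h^{3/2}\|\delta^{\bm\sigma}\|_{\partial\mathcal{T}_{h^*}}\|e_h^u\|$ and $h^{1/2}\|P_V u-\Pi_h^P u\|_{\partial\mathcal{T}_{h^*}}\|e_h^u\|$, the third and fourth target terms.

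I expect the main obstacle to be the term $(\bm c\delta^{\bm\sigma},P_{\mathbf{W}}\bm\Phi)$: a direct Cauchy--Schwarz only gives $\|\delta^{\bm\sigma}\|\,\|\bm\Phi\|\sim\|\delta^{\bm\sigma}\|\,\|e_h^u\|$, which is one power of $h$ short of the required $h\|\delta^{\bm\sigma}\|_{\mathcal{T}_h}$. To recover it I use the $L^2$-orthogonality $\delta^{\bm\sigma}\perp\mathbf{W}_h$ together with the dual relation $\bm c\bm\Phi=\nabla\Psi$: writing $P_{\mathbf{W}}\bm\Phi=\bm\Phi-\delta^{\bm\Phi}$ splits the term into $-(\bm c\delta^{\bm\sigma},\delta^{\bm\Phi})+(\delta^{\bm\sigma},\nabla\Psi)$, where the first summand is $\lesssim h\|\delta^{\bm\sigma}\|\,|\bm\Phi|_1$, and in the second I subtract $P_{\mathbf{W}}\nabla\Psi\in\mathbf{W}_h$ at no cost to get $(\delta^{\bm\sigma},\nabla\Psi-P_{\mathbf{W}}\nabla\Psi)\lesssim h\|\delta^{\bm\sigma}\|\,|\Psi|_2$, the second target term. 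The last term $\langle P_M u-\Pi_h^P u,\delta^{\bm\Phi}\cdot\bm n\rangle_{\partial\mathcal{T}_{h^*}}$ is treated by the same weighted Cauchy--Schwarz and contributes $h^{1/2}\|P_M u-\Pi_h^P u\|_{\partial\mathcal{T}_{h^*}}\|e_h^u\|$; here I replace $\|P_M u-\Pi_h^P u\|_{\partial\mathcal{T}_{h^*}}$ by $\|P_V u-\Pi_h^P u\|_{\partial\mathcal{T}_{h^*}}$ up to the same-order projection difference $\|P_M u-P_V u\|_{\partial\mathcal{T}_{h^*}}$, which is controlled by Lemma~\ref{lemma_L_2}. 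Collecting the seven bounds and cancelling the common factor $\|e_h^u\|$ yields the assertion; throughout, the passage from the elementwise $h_T$-weights to the global powers of $h$ in the statement uses the mesh regularity.
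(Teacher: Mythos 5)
Your proposal is correct and follows essentially the same route as the paper's proof: the identical seven-term decomposition from Lemma \ref{erru decompose}, weighted Cauchy--Schwarz with the lowest-order projection bounds of Lemmas \ref{lemma_L_2}--\ref{lemma_pihp}, the inverse trace inequality for the piecewise-polynomial $e_h^{\bm\sigma}$, the regularity bound \eqref{regularity}, and the same orthogonality-plus-duality trick for $(\bm c\,\delta^{\bm\sigma},P_{\mathbf{W}}\bm\Phi)$ (the paper subtracts $\nabla P_V\Psi$ where you subtract $P_{\mathbf{W}}\nabla\Psi$, an immaterial variant since both lie in $\mathbf{W}_h$). Your explicit reduction of $\|P_M u-\Pi_h^P u\|_{\partial\mathcal{T}_{h^*}}$ to $\|P_V u-\Pi_h^P u\|_{\partial\mathcal{T}_{h^*}}$ in the $\Pi_6$ term is in fact slightly more careful than the paper, whose own bound leaves that term with $P_M$ while the lemma's statement carries $P_V$.
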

\begin{proof}  Set
$% \begin{equation*}
 ||e_h ^u|| ^2 =:\Pi_1+\Pi_2+\Pi_3+\Pi_4+\Pi_5+\Pi_6+\Pi_7
$ % \end{equation*}
 with 
 \begin{align*}
 \Pi_1 &:= (\bm c e_h^{\bm \sigma},\delta^{\bm \Phi})_{\mathcal T_h},\quad
 \Pi_2: = (\bm c \delta^{\bm \sigma} ,P_{\mathbf{W}}\bm \Phi),\quad
 \Pi_3: = \langle e^u_h -e_h^{\widetilde{\lambda}},\delta^{\bm \Phi}\cdot\bm n\rangle_{\partial\mathcal T_{h^*}},\\
 \Pi_4 &:= \sum_{T\in \mathcal{T}_h}\langle\alpha_T(e^u_h-e^{\widetilde{\lambda}}_h),P_V\Psi-\Pi_h^P\Psi\rangle_{\partial T^*},\quad
 \Pi_5: = -\langle  e_h^{\bm \sigma}\cdot\bm n, \delta^{\widetilde{\Psi}}\rangle_{\partial \mathcal{T}_{h^*}},\\
 \Pi_6 &:=  \langle P_M u-\Pi_h^P u,\delta^{\bm \Phi}\cdot \bm n\rangle_{\partial \mathcal{T}_{h^*}},\quad
 \Pi_7 := -L_{\bm \sigma,u}(\Pi_h^P \Psi-P_V \Psi).
  \end{align*}

In view of  Lemmas \ref{lemma_L_2}-\ref{lemma_pihp},  the assumption (\ref{regularity}),  and  Cauchy-Schwartz inequality, we obtain
	\begin{align*}
		|\Pi_1| &\lesssim \| e_h^{\bm \sigma} \|_{\mathcal{T}_h}\| \delta^{\bm \Phi} \|_{\mathcal{T}_h}\lesssim h\|e_h^{\bm \sigma}\|_{\mathcal{T}_h}\| \bm \Phi \|_{1,\Omega}\lesssim h|||(e_h^u,e_h^{\widetilde{\lambda}},e_h^{\bm \sigma})||| \| e_h^u \|_{\mathcal{T}_h},\\
		|\Pi_3| &\le \| e_h^u-e_h^{\widetilde{\lambda}} \|_{\partial \mathcal{T}_{h^*}}\| \delta^{\bm \Phi} \|_{\partial \mathcal{T}_h}\lesssim h^\frac{1}{2} \| e_h^u-e_h^{\widetilde{\lambda}} \|_{\partial \mathcal{T}_{h^*}}\| \bm \Phi \|_{1,\Omega}\lesssim h|||(e_h^u,e_h^{\widetilde{\lambda}},e_h^{\bm \sigma})||| \| e_h^u \|_{\mathcal{T}_h},\\
		|\Pi_4| &\le h^{-1}\| e_h^u-e_h^{\widetilde{\lambda}} \|_{\partial \mathcal{T}_{h^*}}\| P_V \Psi -\Pi_h^P \Psi \|_{\partial \mathcal{T}_{h^*}}\lesssim h|||(e_h^u,e_h^{\widetilde{\lambda}},e_h^{\bm \sigma})||| \| e_h^u \|_{\mathcal{T}_h},\\
		|\Pi_5| &\le \| e_h^{\bm \sigma} \|_{\partial \mathcal{T}_{h^*}}\| \delta^{\widetilde{\Psi}} \|_{\partial \mathcal{T}_{h^*}}\lesssim h\| e_h^{\bm \sigma} \|_{\mathcal{T}_h}\| \Psi \|_{2,\Omega}\lesssim h|||(e_h^u,e_h^{\widetilde{\lambda}},e_h^{\bm \sigma})||| \| e_h^u \|_{\mathcal{T}_h},\\
	|\Pi_6| &\lesssim  \| P_M u-\Pi_h^P u \|_{\partial \mathcal{T}_{h^*}} \|\delta^{\bm \Phi} \|_{\partial \mathcal{T}_{h^*}} \lesssim h^\frac{1}{2} \| P_M u-\Pi_h^P u \|_{\partial \mathcal{T}_{h^*}} \| e_h^u \|_{\mathcal{T}_h} \\		
		|\Pi_7| &\lesssim (\| \delta^{\bm \sigma} \|_{\partial \mathcal{T}_{h^*}}+h^{-1}\| P_V u- \Pi_h^P u \|_{\partial \mathcal{T}_{h^*}}) (\|P_V\Psi -\Pi_h^P \Psi  \|_{\partial \mathcal{T}_{h^*}})\\
		&\lesssim h^{\frac{3}{2}} \| \delta^{\bm \sigma} \|_{\partial \mathcal{T}_{h^*}}\| \Psi \|_{2,\Omega}+h^\frac{1}{2}\| P_V u- \Pi_h^P u \|_{\partial \mathcal{T}_{h^*}} \| \Psi \|_{2,\Omega}\\
		&\lesssim (h^\frac{3}{2}\| \delta^{\bm \sigma} \|_{\partial \mathcal{T}_{h^*}}+h^\frac{1}{2}\| P_V u-\Pi_h^P u \|_{\partial \mathcal{T}_{h^*}})\| e_h^u \|_{\mathcal{T}_h}
	\end{align*}
The thing left is to estimate $\Pi_2$, and we have
\begin{align*}
\Pi_2 &= (P_\mathbf{W} \bm \sigma -\bm \sigma ,\bm c P_\mathbf{W} \bm \Phi)\\
	 &=(P_\mathbf{W} \bm \sigma -\bm \sigma,\bm c (P_\mathbf{W} \bm \Phi-\bm \Phi)) +(P_\mathbf{W} \bm \sigma -\bm \sigma ,\bm c \bm \Phi)\\
	&=(P_\mathbf{W} \bm \sigma -\bm \sigma,\bm c (P_\mathbf{W} \bm \Phi-\bm \Phi))+(P_\mathbf{W} \bm \sigma -\bm \sigma, \nabla \phi-\nabla P_V \phi)\\
	&\lesssim h\| \delta^{\bm \sigma} \|_{\mathcal{T}_h}\| \bm\Phi \|_{1,\Omega}+h\| \delta^{\bm \sigma} \|_{\mathcal{T}_h} \| \Psi \|_{2,\Omega}\\
	&\lesssim h\| \delta^{\bm \sigma} \|_{\mathcal{T}_h}\| e_h^u \|_{\mathcal{T}_h}.
\end{align*}
Finally, combining   all the estimates of $\Pi_j$ ($j=1,\cdots,7)$ indicates the conclusion.
\end{proof}
 \begin{thm}\label{mainth}
	%For $k\geq 0$, assume $(u, \bm \sigma)\in H^{k+2} (\mathcal{T}_h)\times [H^{k+1} (\mathcal{T}_h)]^d$, the local spaces of HDG method are $V(T)=P_{k+1} (T)$, $\mathbf{W} (T)=[P_k (T)]^d$ and $M(F)=P_{k+1} (F)$, and $\alpha_T= \frac{1}{h_T} $. And $\forall T \in \mathcal{T}_h$ is arbitrary polygon for $d=2$ or simplex for $d=3$, $(\bm \sigma_h,u_h)$ are obtained from corresponding EDG method , then
	Let $(u, \bm \sigma)\in H^{k+2} (\mathcal{T}_h)\times [H^{k+1} (\mathcal{T}_h)]^d$ be the weak solution to model \eqref{elliptic}  with $k\geq0$, and let $(u_h,\widetilde {\lambda}_h,\bm \sigma_h)\in V_h\times \widetilde {M}_h (g) \times \mathbf{W}_h$ be the solution to the EDG scheme (\ref{model1})-(\ref{model3}).  Then, under the regularity assumption \eqref{regularity}, it holds
	\begin{eqnarray}
%	||\bm  \sigma -\bm  \sigma_h||&\lesssim& h ^{k+1} (||\bm \sigma||_{k+1 ,\mathcal{T}_h} + ||u||_{k+2 ,\mathcal{T}_h}). \label{flux}\\
	||u-u_h||&\lesssim& h^{k+2} (||\bm \sigma||_{k+1,\mathcal{T}_h}+||u||_{k+2,\mathcal{T}_h}).\label{scalar}
	\end{eqnarray}
\end{thm}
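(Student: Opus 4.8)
The plan is to split the potential error by the triangle inequality as
\begin{equation*}
\|u-u_h\|\le \|u-P_V u\|+\|P_V u-u_h\|=\|\delta^u\|+\|e_h^u\|,
\end{equation*}
and to treat the two pieces separately. The projection part is immediate: Lemma \ref{lemma_L_2} gives $\|\delta^u\|_{\mathcal T_h}\lesssim h^{k+2}\,|u|_{k+2,\mathcal T_h}$, which already has the desired order. Hence the whole problem reduces to bounding the discrete error $\|e_h^u\|$, and for this Lemma \ref{esti4I} has already carried out the hard analytic work (the duality argument of Lemma \ref{erru decompose}) by reducing $\|e_h^u\|$ to the four terms $h\,|||(e_h^u,e_h^{\widetilde\lambda},e_h^{\bm\sigma})|||$, $h\|\delta^{\bm\sigma}\|_{\mathcal T_h}$, $h^{3/2}\|\delta^{\bm\sigma}\|_{\partial\mathcal T_{h^*}}$, and $h^{1/2}\|P_V u-\Pi_h^P u\|_{\partial\mathcal T_{h^*}}$. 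It therefore remains only to verify that each of these four quantities is $O(h^{k+2})$.

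The second and third terms I would handle directly by Lemma \ref{lemma_L_2}: it gives $\|\delta^{\bm\sigma}\|_{\mathcal T_h}\lesssim h^{k+1}|\bm\sigma|_{k+1,\mathcal T_h}$ and, from its trace part, $\|\delta^{\bm\sigma}\|_{\partial\mathcal T_{h^*}}\lesssim h^{k+1/2}|\bm\sigma|_{k+1,\mathcal T_h}$, so that $h\|\delta^{\bm\sigma}\|_{\mathcal T_h}$ and $h^{3/2}\|\delta^{\bm\sigma}\|_{\partial\mathcal T_{h^*}}$ are both of order $h^{k+2}$. For the first term I would invoke Lemma \ref{errsig}: inserting the projection bounds of Lemmas \ref{lemma_L_2} and \ref{lemma_pihp} into its right-hand side yields $|||(e_h^u,e_h^{\widetilde\lambda},e_h^{\bm\sigma})|||\lesssim h^{k+1}(\|\bm\sigma\|_{k+1,\mathcal T_h}+\|u\|_{k+2,\mathcal T_h})$, the same estimate that underlies the flux bound \eqref{flux}, so multiplying by $h$ again gives order $h^{k+2}$.

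The main obstacle is the fourth term $h^{1/2}\|P_V u-\Pi_h^P u\|_{\partial\mathcal T_{h^*}}$, for which no single earlier lemma applies directly and which must be shown to satisfy $\|P_V u-\Pi_h^P u\|_{\partial\mathcal T_{h^*}}\lesssim h^{k+3/2}$. The idea is to insert $u$ and split elementwise,
\begin{equation*}
\|P_V u-\Pi_h^P u\|_{\partial T^*}\le \|u-P_V u\|_{\partial T^*}+\|u-\Pi_{h^*}^P u\|_{\partial T^*},
\end{equation*}
using that on the skeleton $\mathcal F_h^*$ the operator $\Pi_h^P$ coincides with $\Pi_{h^*}^P$. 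The first summand is $O(h_T^{k+3/2})$ by the trace estimate in Lemma \ref{lemma_L_2}, and the second is $O(h_T^{k+3/2})$ by Lemma \ref{lemma_pihp}. Squaring, summing over $T\in\mathcal T_h$, and using that the patches $\omega_T$ have bounded overlap (a consequence of the shape-regularity \textbf{M1}--\textbf{M2}) then yields $\|P_V u-\Pi_h^P u\|_{\partial\mathcal T_{h^*}}\lesssim h^{k+3/2}|u|_{k+2,\mathcal T_h}$, so the fourth term is also $O(h^{k+2})$.

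Collecting the four estimates in Lemma \ref{esti4I} gives $\|e_h^u\|\lesssim h^{k+2}(\|\bm\sigma\|_{k+1,\mathcal T_h}+\|u\|_{k+2,\mathcal T_h})$, which, combined with the projection bound for $\|\delta^u\|$, completes the proof. I expect the only nonroutine point to be the patch-overlap bookkeeping in the fourth term, where the two different projection estimates must be combined and the overlap of the neighborhoods $\omega_T$ controlled uniformly via the mesh regularity; everything else is a matter of matching powers of $h$.
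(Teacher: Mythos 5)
Your proposal is correct and takes essentially the same route as the paper: the paper's own proof is just a one-line combination of the triangle inequality with Lemmas \ref{esti4I}, \ref{errsig} and \ref{lemma_L_2}, which is exactly the chain of estimates you carry out in detail. The only difference is that you make explicit the steps the paper leaves implicit, in particular bounding $\|P_V u-\Pi_h^P u\|_{\partial \mathcal{T}_{h^*}}$ by inserting $u$ and invoking Lemma \ref{lemma_pihp} together with the bounded overlap of the patches $\omega_T$.
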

\begin{proof} %[Proof of \eqref{scalar}]
	From the  triangle inequality and  Lemmas \ref{esti4I}, \ref{errsig} and  \ref{lemma_L_2}, we have
	\begin{align*}
		\| u-u_h \|_{\mathcal{T}_h}\le \| e_h^u \|_{\mathcal{T}_h}+\| \delta^u \|_{\mathcal{T}_h} \lesssim h^{k+2} (| \bm \sigma |_{k+1}+| u |_{k+2}).
	\end{align*}
\end{proof}

\section{Numerical results}
In this section, we use a   two-dimensional numerical example to verify the   theoretical results.  We take  $\Omega = [0,1]\times [0,1]$, and let the exact solution to \eqref{elliptic}  be $u(x,y)=sin(\pi x)sin(\pi y)$ with the coefficient matrix 
\begin{equation} \label{define_c}
\bm c=
\left[
\begin{array}{ccc}
1+{x^2}{y^2} & 0\\
0 & 1+{x^2}{y^2}\\
\end{array}
\right].
\end{equation}

We consider two types of meshes:  uniform triangular meshes   and  quadrilateral meshes (Figure 1). Numerical results of the flux and potential approximations are listed in  Tables \ref{tab:ex1} and  \ref{tab:ex2} for the proposed EDG methods and the corresponding HDG methods with $k=0, 1, 2$.  We can see that both the HDG and EDG methods converge with the optimal rates. 

Table \ref{tab:ex3} shows the numbers of unknowns of the reduced system \eqref{bilinear_form} with $k=0, 1, 2$  which  contains the degrees of freedom of the numerical traces on interelement boundary as the only unknowns.  In this example, the EDG method always leads to  smaller systems than the corresponding HDG method.

% Table \ref{tab:ex1} gives the results of our proposed EDG methods and the corresponding HDG methods with $k=0, 1$ and $\alpha_T =h_T ^{-1}$ on uniform triangular meshes. 
 
%The third table show the result when we choose local spaces as: $V(T)=P_{k+1} (T),\ \mathbf{W} (T)=(P_k (T))^2,M(F)= P_{k} (F)$ to solve the same problem as example 2, where we take $k=1 \ and\  2$, $\alpha_T =h_T ^{-1} $. This kind of EDG methods convergence with suboptimal order. \\

% Table \ref{tab:ex2} shows the results of our proposed EDG method and the corresponding HDG method with $k=0, 1$ and $\alpha_T =h_T ^{-1}$ on   quadrilateral meshes. We can see that they also converge with the optimal order under this quadrilateral triangulation.

\begin{table}[H]
	\begin{threeparttable}
		\caption{Convergence history on triangular meshes}\label{tab:ex1}	
		\begin{tabular}{|cc|cccc|cccc|}
			\hline
			&&&EDG&&&&HDG&&\\
			\hline
			$k$   &  Mesh          &$||u-u_h||$       &rate   &  $||\bm \sigma -\bm \sigma_h||$ &rate  &$||u-u_h||$       &rate   &  $||\bm \sigma -\bm \sigma_h||$ &rate   \\
			\hline
			0        &  $4\times4$       &8.076e-2   & -        &3.820e-1       &-            &1.940e-1 &-          &2.775e-1 &- \\
			&  $8\times8$       &2.084e-2   & 1.954   & 1.966e-1    &0.958    &4.917e-2 &1.980   &1.412e-1 &0.975 \\
			&  $16\times16$    &5.274e-3   & 1.982  & 9.907e-2     &0.989   &1.233e-2  &1.996  &7.089e-2 &0.994 \\
			&  $32\times32$   &1.323e-3   & 1.995   &4.963e-2     &0.997   &3.086e-3 &1.998   &3.549e-2 &0.998 \\
			&  $64\times64$   &3.310e-4   & 1.999  &2.483e-2     &0.999     &7.717e-4 &2.000   &1.775e-2 &1.000 \\
			\hline
			1       &  $4\times4$       & 2.305e-2  &-          & 4.770e-2        &-             &2.410e-2 &-        &4.232e-2 &\\
			&  $8\times8$      &  2.883e-3  &2.999    & 1.312e-2      &1.862      &3.031e-3  &2.991 &1.085e-2 &1.964\\
			&  $16\times16$   &  3.522e-4  &3.033      & 3.580e-3   &1.874     &3.791e-4   &2.999 &2.730e-3&1.991\\
			&  $32\times32$  &  4.336e-5  &3.022       & 9.358e-4  &1.936     &4.739e-5   &2.999 &6.839e-4&1.997\\
			&  $64\times64$  &  5.387e-6  &3.009      &2.377e-4     &1.977      &5.923e-6 &3.000  &1.711e-4  &1.999\\
			\hline
			2       &  $4\times4$       & 2.677e-3  &-          & 6.482e-3        &-             &2.880e-3 &-        &5.378e-3 &\\
			&  $8\times8$      &  1.746e-4 &3.938    & 8.074e-4      &3.005      &1.827e-4  &3.978 &6.874e-4 &2.968\\
			&  $16\times16$   &  1.106e-5  &3.981      & 1.004e-4   &3.008     &1.146e-5   &3.995 &8.650e-5&2.990\\
			&  $32\times32$  &  6.938e-7  &3.994       & 1.251e-5  &3.004     &7.166e-7   &3.999 &1.084e-5&2.997\\
			&  $64\times64$  &  4.342e-08  &3.998      &1.561e-6     &3.002      &4.479e-8 &4.000  &1.355e-6 &2.999\\
			\hline
		\end{tabular}
		%\begin{tablenotes}
%			\footnotesize
%			\item
%			Example 1: When the choices of local spaces, $V(T)$, $M(F)$, $\mathbf{W} (T)$ are of degree of 1,1,0 and 2,2,1 polynomials for HDG and EDG methods, respectively, the both methods converge with the same order for flux and potential variable, optimal when $\alpha_T= h_T ^{-1}$. For any $k$, we can observe that EDG methods have less degree of freedom than HDG methods they obtained from.
%		\end{tablenotes}
	\end{threeparttable}
\end{table}

\begin{figure}[H]
\centering
\includegraphics[width=0.49\textwidth,clip]{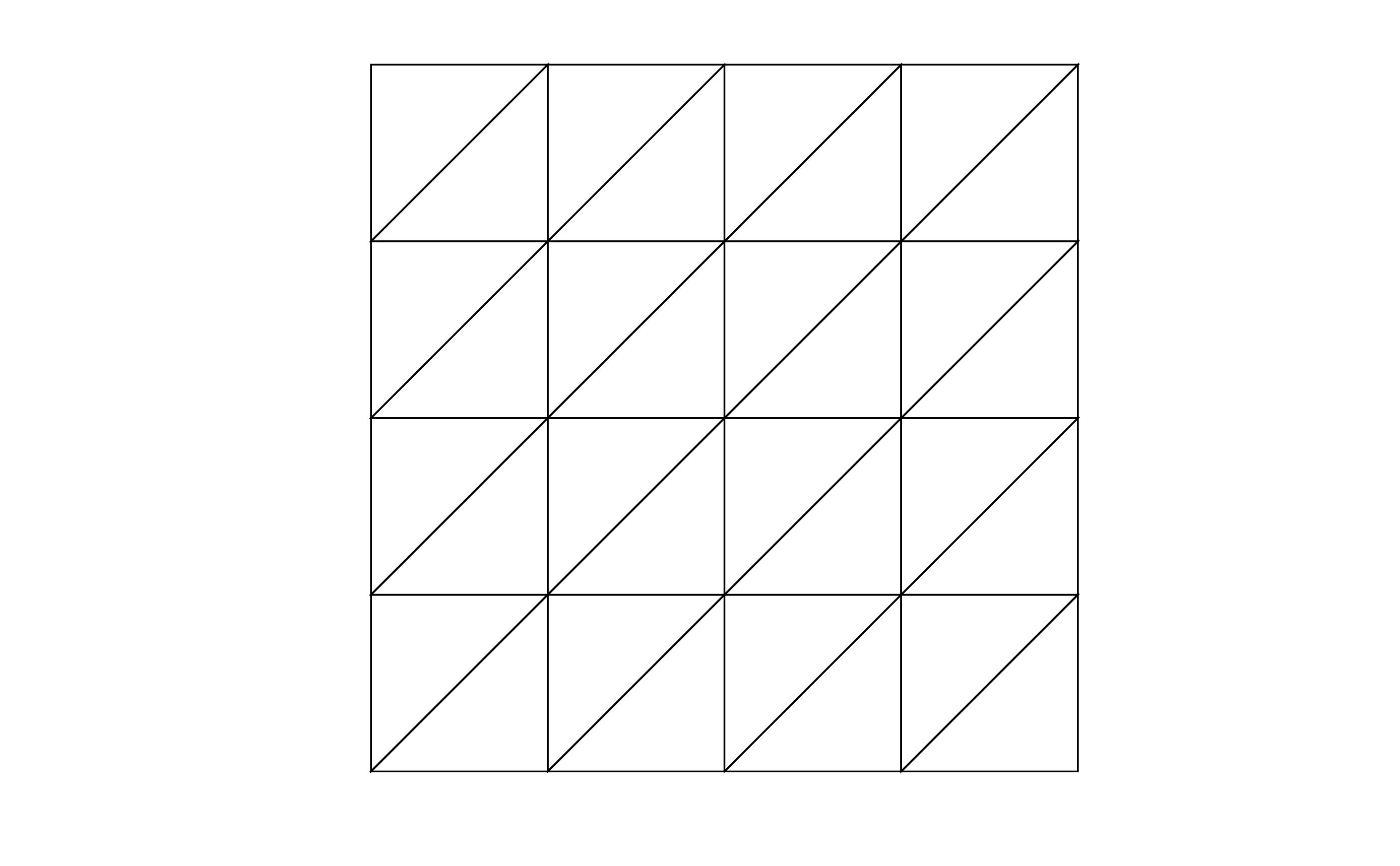}
\hfill
\includegraphics[width=0.49\textwidth,clip]{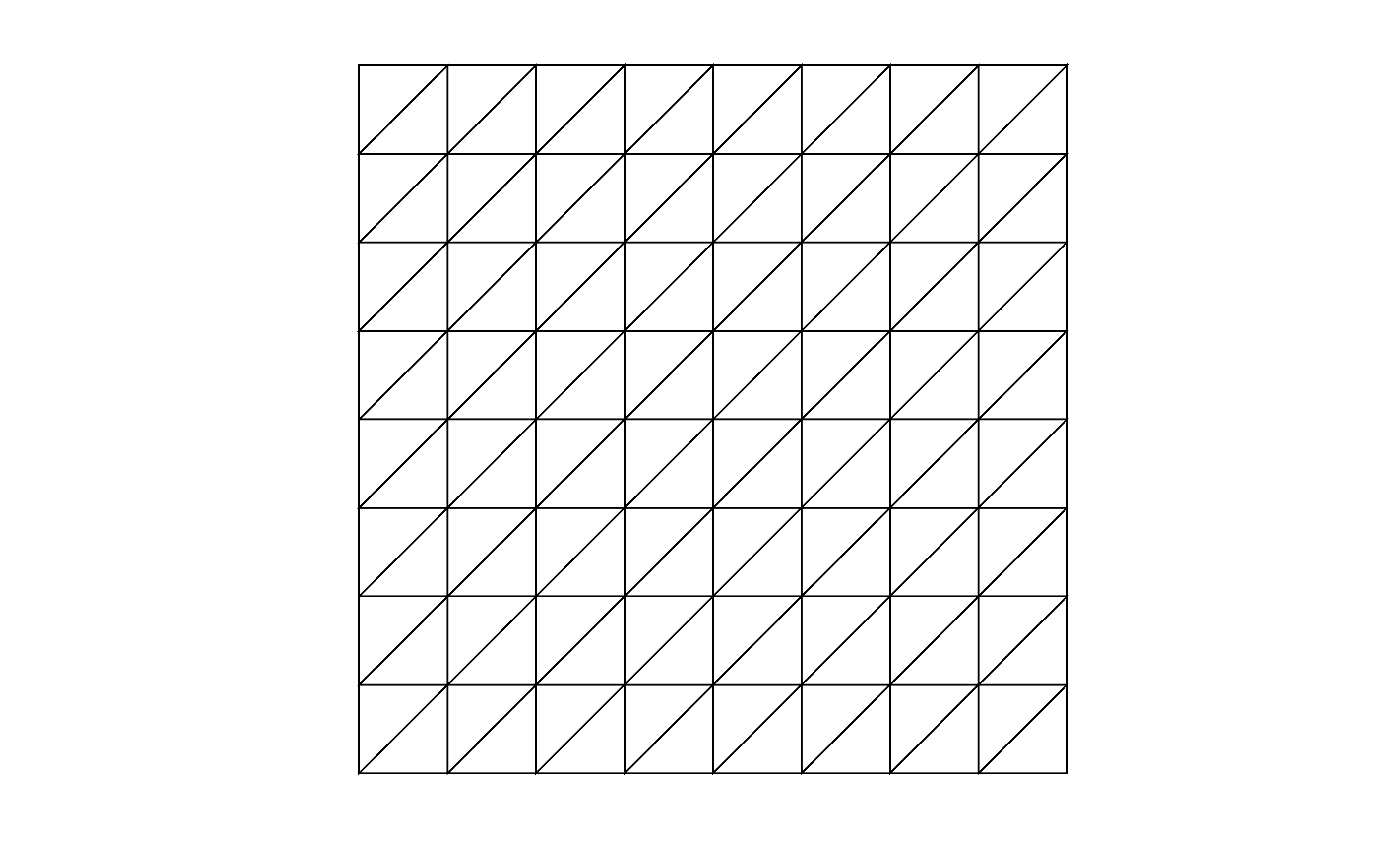}
\hfill
\includegraphics[width=0.49\textwidth,clip]{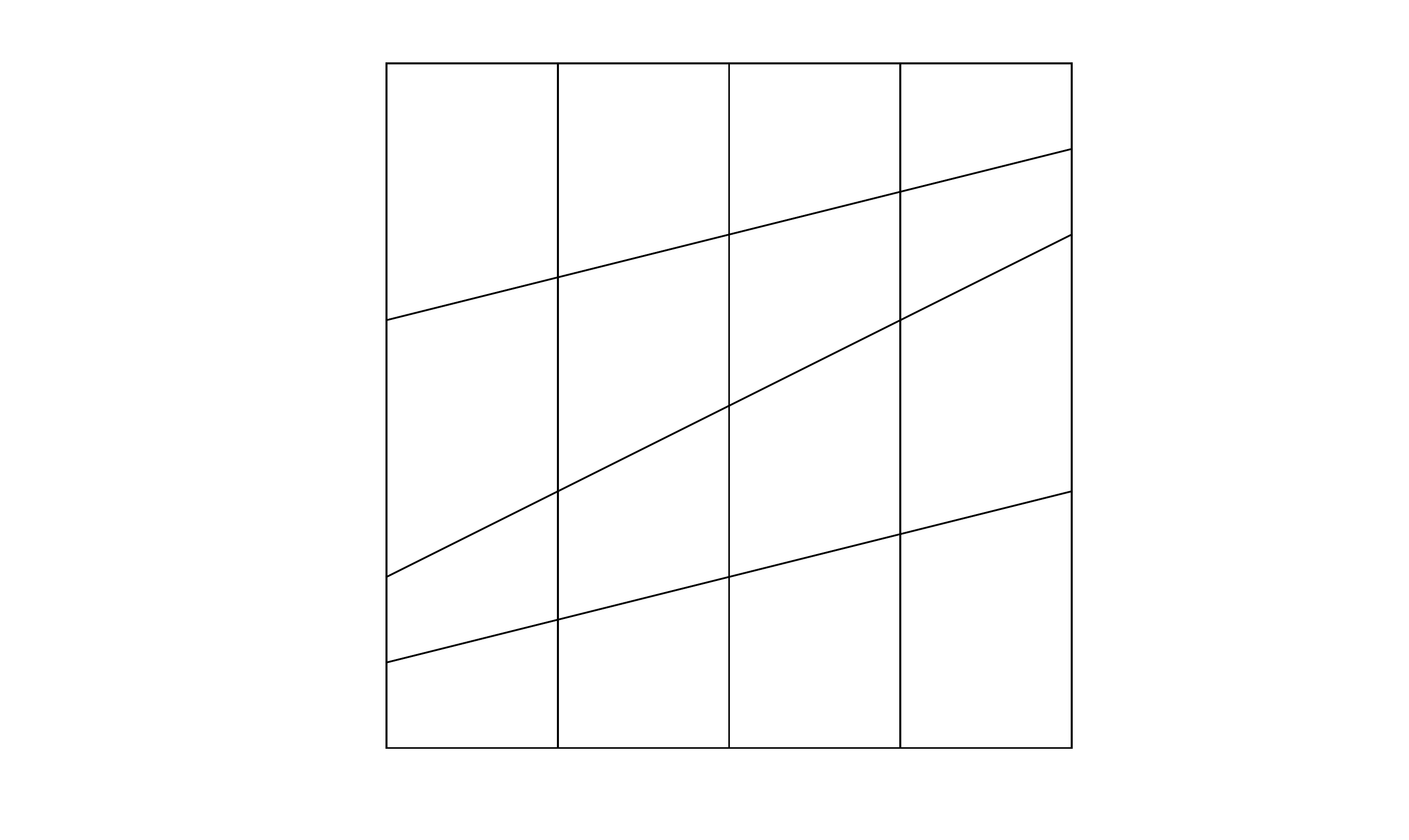}
\hfill
\includegraphics[width=0.49\textwidth,clip]{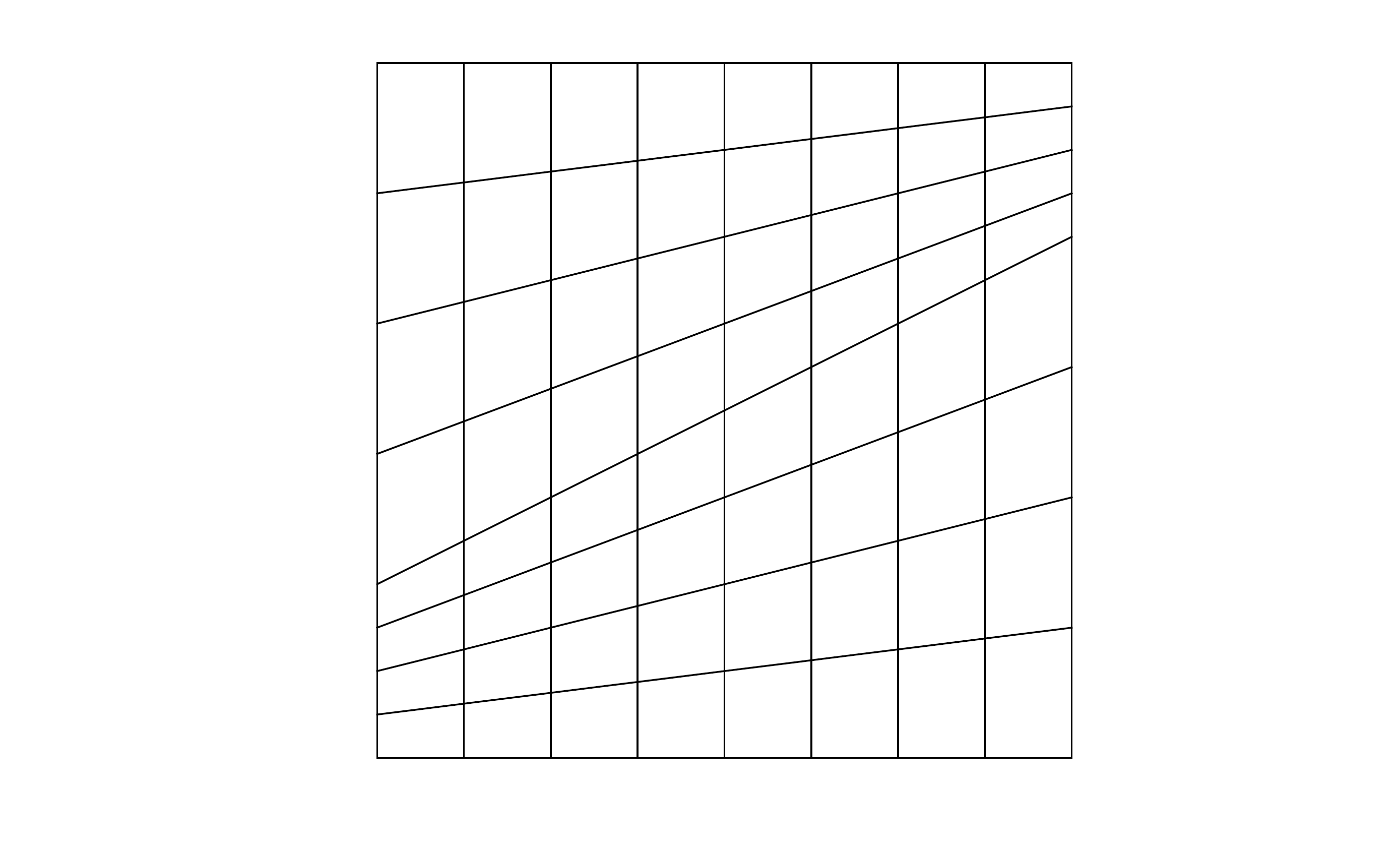}
\vspace{-0.1cm}
 \caption{\label{fig:nodes}{Two types of meshes: \emph{Left}: $4\times 4$; \emph{Right}: $8\times 8$.}}
\end{figure}

\begin{table}[H]
	\centering
	\begin{threeparttable}
		\caption{Convergence history on quadrilateral meshes }	\label{tab:ex2}
		\begin{tabular}{|cc|cccc|cccc|}
			\hline
			&&&EDG&&&&HDG&&\\
			\hline
			$k$   &  Mesh          &$||u-u_h||$       &rate   &  $||\bm \sigma -\bm \sigma_h||$ &rate    &$||u-u_h||$       &rate &  $||\bm \sigma -\bm \sigma_h||$ &rate\\
			\hline
			0        &  $4\times4$       &2.623e-1   & -        &3.324e-1       &-    &2.722e-1&- &3.362e-1&-      \\
			&  $8\times8$       & 6.917e-2  & 1.923   &1.706e-1    &0.962 &7.228e-2&1.913 &1.721e-1&0.950  \\
			&  $16\times16$    &1.752e-2   & 1.981  & 8.583e-2     &0.991  &1.845e-2&1.970 &8.621e-2&0.997 \\
			&  $32\times32$   &4.395e-3   & 1.995   &4.298e-2     &0.998  &4.644e-3&1.990 &4.305e-2&1.002 \\
			&  $64\times64$   &1.100e-3   & 1.998  &2.150e-2     &0.999    &1.163e-3&1.998 &2.151e-2&1.001\\
			\hline
			1       &  $4\times4$       &4.666e-2  &-          & 7.782e-2        &-    &4.498e-2&-  &8.057e-2&-       \\
			&  $8\times8$      &  6.023e-3  &2.960    & 1.973e-2      &1.982  &5.408e-3&3.056 &2.058e-2&1.919   \\
			&  $16\times16$   &  7.633e-4  &2.980      & 4.958e-3   &1.993   &6.592e-4&3.036 &5.094e-3&2.014  \\
			&  $32\times32$  &  9.590e-5  &2.993       & 1.241e-3  &1.998   &8.201e-5&3.007 &1.258e-3&2.018 \\
			&  $64\times64$  &  1.201e-5  &2.997      &3.104e-4     &1.999    &1.026e-5&3.000 &3.123e-4&2.010 \\
			\hline
			2       &  $4\times4$       &7.744e-3  &-          & 1.200e-2        &-    &7.720e-3&-  &1.219e-2&-       \\
			&  $8\times8$      &  4.942e-4  &3.970    & 1.387e-3      &3.113  &4.936e-4&3.967 &1.412e-3&3.111   \\
			&  $16\times16$   &  3.096e-5  &3.997      & 1.729e-4   &3.004   &3.101e-5&3.993 &1.749e-4&3.013 \\
			&  $32\times32$  &  1.933e-6  &4.002       & 2.139e-5  &3.015   &1.940e-6&3.999 &2.152e-5&3.023 \\
			&  $64\times64$  &  1.207e-7  &4.001     &2.659e-6    &3.008    &1.213e-7&4.000 &2.667e-6&3.012 \\
			\hline
		\end{tabular}
	\end{threeparttable}
\end{table}

\begin{table}[H]
	\centering
	\begin{threeparttable}
		\caption{Comparison of numbers of degrees of freedom }	\label{tab:ex3}
		\begin{tabular}{|cc|cc|cc|}
			\hline
					&	 &simplex &meshes &quadrilateral &meshes \\
			\hline
			$k$     &Mesh    &EDG&HDG  &EDG&HDG      \\
			\hline
			0        &  $4\times4$       &9	  &80        &9       &48          \\
			&  $8\times8$       & 49  & 352   &49    &224  \\
			&  $16\times16$    &225   & 1472 &225    &960  \\
			&  $32\times32$   &961   & 6016   &961     &3968   \\
			&  $64\times64$   &3936   & 24320  &3936     &16128    \\
			\hline
			1       &  $4\times4$       &49  &120          & 33  &72          \\
			&  $8\times8$      &  225  &528   & 161      &336    \\
			&  $16\times16$   &  961  &2208      & 705   &1440     \\
			&  $32\times32$  &  3936  &9024       & 2945  &5952    \\
			&  $64\times64$  &  16129  &36480      &12033     &24192   \\
			\hline
			2       &  $4\times4$       &89  &160          & 57  &96          \\
			&  $8\times8$      &  401  &704   & 273      &448    \\
			&  $16\times16$   &  1697  &2944      & 1185   &1920     \\
			&  $32\times32$  &  6977  &12032       & 4929  &7936    \\
			&  $64\times64$  &  28289  &48640      &20097     &32256   \\
			\hline
		\end{tabular}
	\end{threeparttable}
\end{table}


\begin{thebibliography}{999}\small
%\bibitem{SOBOLEV}R.A.ADAMS, J. J. F. FOURNIER, Sobolev Spaces,Academic Press, 2nd.,2003.
	
%\bibitem{DNFMNFE}D. N. ARNOLD, F. BREZZI, Mixed and non-conforming finite element methods: implementation, post-processing and error estimates, Mod\'el. Math. Anal. Num\'er., 19 (1985), 7-35.

\bibitem{DNA}D. N. Arnold, F. Brezzi, B. Cockburn, L. D. Marini, Unified analysis of discontinuous Galerkin methods for elliptic problems, SIAM J. Anal., 39(2002), 1749-1779.

%\bibitem{IBJOJL}I. BABUSKA, J. ODEN, J.LEE, Mixed-hybrid finite element approximations of second-order elliptic boundary-value problems, Comput.Methods Appl. Mech. Engrg., 11(1977), 175-206.

%\bibitem{TMTFEM}S. C. BRENNER , L. RIDGWAY SCOTT, The Mathematical Theory of Finite Element Methods, Springer, 3rd., 2010.

\bibitem{FJJRLD}F. Brezzi, J. Douglas, JR., L.D. Marini, Two families of mixed finite elements for second order elliptic problems, Numer. Math., 47 (1985), 217-235.

%\bibitem{FBMF}F. BREZZI, M. FORTIN, Mixed and Hybrid Finite Element Methods, Springer-Verlag, New York, 1991.

\bibitem{Chen-Xie-arXiv2017}	C. Gang, X. Xie, Robust weak Galerkin finite element methods for linear elasticity with continuous displacement trace approximation,  arXiv: 1710.07905, 2017.

%\bibitem{BCBD}B. COCKBURN, B. DONG, An analysis of the minimal dissioation local discontinuous Galerkin method for convection-diffusion problems, J. Sci. Comput., 32(2007), pp.1420-1440.

%\bibitem{BCBDJ}B. COCKBURN, B. DONG, AND J. GUZM\'AN, A superconvergent LDG-hybridizable Galerkin method for second-order elliptic problems, Math.Comp., 77(2008), pp. 1887-1916.

% \bibitem{BOJMG} B. COCKBURN, O. DUBOIS, J. GOPALAKRISHNAN, Multigrid for an HDG Method. IMA J. Numer. Anal., 2013, doi: 10.1093/imanum/drt024.

 \bibitem{BJ}B. Cockburn, J. Gopalakrishnan, A characterization of hybridized mixed methods for second order elliptic problems,	SIAM J. Numer. Anal., 42 (2004), 283-301.

\bibitem{BJRUH} B. Cockburn, J. Gopalakrishnan, R. Lazarov, Unified hybridization of discontinuous Galerkin, mixed, and conforming Galerkin methods for second order elliptic problem, SIAM J.Numer. Anal., 47 (2009), 1319-1365.

\bibitem{BJFJPB} B. Cockburn, J. Gopalakrishnan, F. J. Sayas, A projection-based error analysis of HDG methods, Math. Comp.,79 (2009), 1351-1365.

\bibitem{BJSCHK} B. Cockburn, J. Guzm\'an, S. C. Soon, H. K. Stolarski, An analysis of the embeded discontinuous Galerkin method for second order elliptic problems. SIAM J. Numer. Anal., 47.4( 2009), 2686-2707.

\bibitem{BCJGHW}B. Cockburn, J. Guzm\'an, H. Wang, Locally conservative fluxes for the continuous Galerkin method, SIAM J. Numer. Anal., 45(2007), 1742-1776.

\bibitem{BCWQKS}B. Cockburn, W. Qiu, K. Shi, Conditions for superconvergence of HDG methods for second-order elliptic problems, Math. Comp., 81(2012),1327-1353.

\bibitem{FNR}P. Fernandez, N.C. Nguyen, X. Roca, et al. Implicit large-eddy simulation of compressible flows using the Interior Embedded Discontinuous Galerkin method[C]. Aiaa Science and Technology Forum and Exposition. 2016.

%\bibitem{ASPHMM} J. Gopalakrishnan, A Schwarz preconditioner for a hybridized mixed method, Comput. Meth. Appl. Math., 3 (2003), 116-134.

\bibitem{SBHKLSP}S. G\"{u}zey, B. Cockburn, H. K.  Stolarski, The embedded discontinuous Galerkin methods: Application to linear shell problems, Internat. J. Numer. Methods Engrg., 70 (2007).pp. 757-790.

\bibitem{LXAHDG}B. Li, X. Xie, Analysis of a family of HDG methods for second order elliptic problem. J. Comput. Appl. Math., 2016, 37-51.

\bibitem{NPC}N. C. Nguyen, J. Peraire, B. Cockburn. A class of embedded discontinuous Galerkin methods for computational fluid dynamics. J. Comput. Phys., 302, 674-692. 2015.

\bibitem{PNC}J. Peraire, N. C. Nguyen,B. Cockburn. An embedded discontinuous Galerkin method for the compressible Euler and Navier-Stokes equations, Proceedings of the 20th AIAA Computational Fluid Dynamics Conference. 2013.

%\bibitem{PARJMT}P. A. RAVIART, J. M. THOMAS, Primal hybrid finite element methods for 2nd order elliptic equations[J]. Mathematics of computation, 31(1977), 391-413.

\bibitem{SZCWM}Z. Shi, M. Wang, Finite element methods, Science Press, 1st., 2013.

\end{thebibliography}
\end{document}